\newtheorem{thm}{Theorem}
\newtheorem{proposition}[thm]{Proposition}
\newtheorem{remark}{Remark}
\newtheorem{definition}{Definition}
\newtheorem{assumption}{Assumption}
\title{
Small time reachable set of bilinear quantum systems
}
\author{\authorblockN{Nabile Boussa\"{i}d}
\authorblockA{Laboratoire de math\'ematiques\\
Universit\'e de Franche--Comt\'e\\
25030 Besan\c{c}on, France\\
{\tt\small Nabile.Boussaid@univ-fcomte.fr}}
\and
\authorblockN{Marco Caponigro}
\authorblockA{Dept of Mathematical Sciences and CCIB\\
Rutgers University\\
08102 Camden, NJ, USA\\
{\tt\small marco.caponigro@rutgers.edu}}
\and
\authorblockN{Thomas Chambrion}
\authorblockA{Institut \'Elie Cartan de Nancy and\\
INRIA Nancy Grand Est\\
54506 Vand{\oe}uvre, France\\
{\tt\small Thomas.Chambrion@inria.fr}}
}
\begin{document}

\maketitle
\thispagestyle{empty}
\pagestyle{empty}

\begin{abstract}
This note presents an example of bilinear conservative system in an infinite dimensional Hilbert space for which approximate controllability in the Hilbert unit sphere holds for arbitrary small times. This situation is in contrast with the finite dimensional case and is due to the unboundedness of the drift operator. 
\end{abstract}

\section{INTRODUCTION}

\subsection{Control of quantum systems}
The state of a quantum system evolving in a Riemannian manifold $\Omega$ is described by its \emph{wave function}, a point $\psi$ in $L^2(\Omega, \mathbf{C})$. When the system is 
submitted to an electric field (e.g., a laser), the time evolution of the wave is given, under the dipolar approximation and neglecting decoherence,  by the Schr\"{o}dinger equation:
\begin{equation}
\mathrm{i} \frac{\partial \psi}{\partial t}=(-\Delta  +V(x)) \psi(x,t) +u(t)  W(x) \psi(x,t)
\end{equation}
where $\Delta$ is the Laplace-Beltrami operator on $\Omega$,  $V$ and $W$ are real potential accounting for the properties of the free system and the control field respectively, while 
the real function of the time $u$ accounts for the intensity of the laser. 

It is standard to rewrite the dynamics as 
\begin{equation}\label{EQ_main}
\frac{d}{dt} \psi=(A+u(t) B) \psi
\end{equation}
where $\psi$ belongs to a separable Hilbert space $H$ and $(A,B)$ satisfies Assumption \ref{ASS_1}.
\begin{assumption}\label{ASS_1}
$A$ and $B$ are linear operators with domain $D(A)$ and $D(B)$ such that
\begin{enumerate}
\item $A$ is essentially skew-adjoint (possibly unbounded) with domain $D(A)$;
\item there exists an Hilbert basis $(\phi_k)_{k \in \mathbf{N}}$ of $H$ made of eigenvectors of $A$. For every $k$, $A \phi_k= \mathrm{i}\lambda_k \phi_k$;
\item for every $k$ in $\mathbf{N}$, $\phi_k$ belongs to the domain $D(B)$ of $B$;
\item for every $u$ in $\mathbf{R}$, $A+uB$ is essentially skew-adjoint on $D(A)\cap D(B)$;
\item $B$ is essentially skew-adjoint.
\end{enumerate}
\end{assumption}
From Assumption \ref{ASS_1}, for every $u,t$ in $\mathbf{R}$, $e^{t(A+uB)}$ is a unitary operator. By concatenation, for every piecewise constant function $u$, one can define the solution $t\mapsto \Upsilon^u_t \psi_0$ of (\ref{EQ_main}) with initial condition $\psi(0)=\psi_0$. With extra regularity hypotheses (for instance: $B$ bounded), it is possible to define $\Upsilon^u$ for controls $u$ not necessarily piecewise constant.  

A natural question, with many practical implications, is to determine the set of wave functions 
$ \Upsilon^u_T \psi_0$ that can be reached from a given initial condition $\psi_0$ at a 
given time $T$ when the control law $u$ varies in the set $\mathcal U$ of admissible controls 
(here, $\mathcal U$ is the set of piecewise constant functions). 
This set $\{\Upsilon^u_T \psi_0, u \in \mathcal{U} \}$ is called the \emph{reachable set} from 
$\psi_0$ at time $T$ and is denoted ${\mathcal R}_T(\psi_0)$. For $T \in (0,+\infty)$,  
the reachable set from  $\psi_0$ at time smaller than $T$  is 
${\mathcal R}_{< T}(\psi_0)=\cup_{t < T} {\mathcal R}_t(\psi_0)$ and  ${\mathcal R}_{< +\infty}(\psi_0)=
\cup_{T>0}{\mathcal R}_{T}(\psi_0)$.
At this time, no definitive description of the reachable sets is known, 
with the exception of ${\mathcal R}_{< +\infty}(\psi_0)$ for a few simple examples (\cite{beauchard}, \cite{camillo}) where $\mathcal U=L^2([0,+\infty),\mathbf{R})$ and $\Omega$ is 
a bounded interval of $\mathbf{R}$. Notice that even in this 1-D framework, the results are far from obvious. 

Instead of looking at the complicated structure of  ${\mathcal R}_T(\psi_0)$, one may consider its topological closure for a given norm. Many results of \emph{approximate controllability} 
have been given in the last few years. We refer to \cite{turinici,nersesyan,
beauchard-mirrahimi, mirrahimi-continuous, camillo,Schrod2} and references
therein for a description of the known theoretical results concerning the
existence of  controls steering a given source to a given target. As proved in
\cite{nersesyan, genericity-mario-paolo, genericity-mario-privat}, approximate
controllability is a generic property for systems of the type of (\ref{EQ_main}).

\subsection{Temporal diameter}

Let a couple $(A,B)$ of linear operators  associated with an Hilbert basis $(\phi_k)_{k \in \mathbf{N}}$ of $H$ made of eigenvectors of $A$ be given. If ${\mathcal R}_{< + \infty}(\phi_1) $ is everywhere dense in the Hilbert unit sphere $\mathbf{S}_H$ of $H$,   we define the \emph{temporal diameter} of $H$ for the control system (\ref{EQ_main}) as 
$$\rho=\inf \left \{T \geq 0 \left | \overline{{\mathcal R}_{< T}(\phi_1)}=\mathbf{S}_H  \right . \right \}.$$
The aim of this note is to give a positive answer (Theorem \ref{TH_main}) to the following question:
``Does it exists a couple $(A,B)$ satisfying Assumption \ref{ASS_1} such that $\rho=0$?''

The importance of this question goes beyond its purely mathematical aspect. A positive answer is a strong justification of the fact that we can neglect decoherence in (\ref{EQ_main}), since decoherence has little effect in small times.

\subsection{Content of the paper}
The first part of this paper (Section \ref{SEC_finite_dim}) is a short summary of the finite dimensional case. It includes finite dimensional estimates (Section \ref{SEC_finite_dim_estimates}) that are instrumental in our study. The second part (Section \ref{SEC_infinite_dim}) presents basic infinite dimensional material. The third part presents an example (Section \ref{SEC_Example_model}) with zero temporal diameter. The technical computation of this diameter (Sections \ref{SEC_Example_estimates} and \ref{SEC_Example_proof}) combines the results of Sections \ref{SEC_finite_dim_estimates} and \ref{SEC_wc}.

\section{FINITE DIMENSIONAL CASE}\label{SEC_finite_dim}

In this Section, we concentrate on the case where $H$ is finite dimensional. This case has been extensively studied (\cite{Brockett}, \cite{q5}). In this case, a classical choice for the set 
of admissible controls is the set $L^1_{loc}([0,+\infty),\mathbf{R})$ of locally integrable functions in $\mathbf{R}$.Other choices could be to restrict to the set of  piecewise constant 
functions or to extend to the set of Radon measures.  By continuity of the input-output mapping, and since we are only concerned with the closure of the reachable sets, all this choice 
are equivalent. 

\subsection{Bilinear systems in semi-simple Lie Groups}
The operators $A$ and $B$ can be seen as skew-hermitian matrices. Changing $A$ into $A-\mathrm{Trace}(A)/\mathrm{dim}(H)$ and $B$ into $B-\mathrm{Trace}(B)/\mathrm{dim}(H)$ 
just induces a physically irrelevant phase shifts. 

 Considering the resolvent, the original system in $H=\mathbf{C}^N$ can be lift to $SU(N)$, the group of unitary matrices of order $N$ with determinant 1. Denoting with $x$ the matrix 
of the linear operator $\psi \mapsto \Upsilon^u_t \psi$, (\ref{EQ_main}) turns into
\begin{equation}\label{EQ_lift_SU(n)}
\frac{d}{dt}x=(A+uB)x=dR_x(A+uB)
\end{equation}
with initial condition $x(0)=I_H$ where $R$ is the right translation in $SU(N)$: $R_x:y\mapsto yx$. 

The lift  of (\ref{EQ_main})  from $\mathbf{C}^N$ to $SU(n)$ as the right invariant controlled system (\ref{EQ_lift_SU(n)}) allows us to use the rich structure of semi-simple compact Lie 
groups. In particular, the Lie algebra $\mathfrak{su}(N)=T_{I_N}SU(N)$  turns into an Euclidean space when endowed with the(opposite) of the bi-invariant negative definite Killing form 
$K(x,y)=(N\pm1)\mathrm{Trace}(xy)$.

The major drawback is that we have now to consider the whole propagator $x$ (that is, equation (\ref{EQ_main}) with all possible initial conditions).

A classical result states that the temporal diameter of $SU(N)$ for the dynamic of (\ref{EQ_lift_SU(n)}) is finite for generic pairs $(A,B)$.
\begin{proposition}
 If $\mathrm{Lie}(A,B)=\mathfrak{su}(N)$, then there exists $T>0$ such that ${\mathcal R}_{<T}(\psi)=\mathbf{S}_H$ for every $\psi$ in $\mathbf{S}_H$. In other words, $\rho<+\infty$. 
\end{proposition}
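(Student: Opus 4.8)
The plan is to prove the stronger statement that the lifted right--invariant system (\ref{EQ_lift_SU(n)}) is exactly controllable on $SU(N)$ in uniform finite time, and then to project this property down to $\mathbf{S}_H$. Write $\mathcal{A}=\bigcup_{t\ge 0}\mathcal{R}_t(I_H)$ for the attainable set from the identity of (\ref{EQ_lift_SU(n)}). Since the system is right--invariant, concatenation of admissible controls gives $\mathcal{R}_t(x)=\mathcal{R}_t(I_H)\,x$ and $\mathcal{R}_s(I_H)\,\mathcal{R}_t(I_H)\subseteq\mathcal{R}_{s+t}(I_H)$, so $\mathcal{A}$ is a \emph{subsemigroup} of the compact connected group $SU(N)$. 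Once I show that $\mathcal{R}_{\le T}(I_H)=SU(N)$ for some finite $T$, the conclusion is immediate: by right--invariance the reachable set on the sphere is $\mathcal{R}_{\le T}(\psi)=\mathcal{R}_{\le T}(I_H)\,\psi=SU(N)\,\psi$, and since $SU(N)$ acts transitively on the unit sphere of $\mathbf{C}^N$ (the case $N=1$ being trivial) this equals $\mathbf{S}_H$; taking any $T'>T$ then gives $\mathcal{R}_{<T'}(\psi)=\mathbf{S}_H$, i.e.\ $\rho<+\infty$.

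First I would record that the hypothesis $\mathrm{Lie}(A,B)=\mathfrak{su}(N)$ is exactly the accessibility rank condition for (\ref{EQ_lift_SU(n)}). By the classical accessibility theorem, this guarantees that $\mathcal{R}_{\le t}(I_H)$ has nonempty interior in $SU(N)$ for every $t>0$; in particular $\mathcal{A}$ has nonempty interior. The core of the argument is then to upgrade \emph{accessibility} to \emph{controllability}, and here the compactness of $SU(N)$ is decisive. I would first show that $\mathcal{A}$ is dense: its closure $\overline{\mathcal{A}}$ is a closed subsemigroup of a compact group, hence a subgroup, because for any $s\in\overline{\mathcal{A}}$ the sequence $(s^n)_n$ admits a subsequence converging to the identity, so that $e$ and $s^{-1}=\lim_k s^{n_k-1}$ both lie in $\overline{\mathcal{A}}$; a subgroup with nonempty interior is open, hence clopen, hence all of the connected group $SU(N)$. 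Exactness then follows from a short density argument: given $g\in SU(N)$ and a nonempty open $U\subseteq\mathcal{A}$, density provides $s\in\mathcal{A}\cap gU^{-1}$, so that $g=su$ with $u\in U\subseteq\mathcal{A}$, whence $g\in\mathcal{A}$ by the semigroup property. Thus $\mathcal{A}=SU(N)$.

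It remains to obtain a single finite time. Fix $\varepsilon>0$ and a nonempty open $W\subseteq\mathcal{R}_{\le\varepsilon}(I_H)$. For an arbitrary $g\in SU(N)=\mathcal{A}$, pick $w\in W$ and write $g=w\,(w^{-1}g)$; since $w^{-1}g\in\mathcal{A}$ is reached in some time $q$, the open set $W(w^{-1}g)$ contains $g$ and is contained in $\mathcal{R}_{\le\varepsilon+q}(I_H)$, so $g\in\mathrm{int}\,\mathcal{R}_{\le\varepsilon+q}(I_H)$. Hence the increasing family $\{\mathrm{int}\,\mathcal{R}_{\le t}(I_H)\}_{t>0}$ is an open cover of the compact group $SU(N)$, and extracting a finite (nested) subcover yields a finite $T$ with $\mathcal{R}_{\le T}(I_H)=SU(N)$, as required. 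The step I expect to be the real obstacle, and the one that genuinely uses the structure of the problem, is the passage from accessibility to controllability: for a system with a nonzero drift $A$ this implication fails on a non--compact group, and it is precisely the compactness and connectedness of $SU(N)$ that force the attainable semigroup to fill the whole group. This is also the feature that will be lost in the infinite--dimensional setting, where the unboundedness of the drift is what ultimately permits the small--time phenomenon that is the subject of this note.
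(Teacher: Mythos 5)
Your proof is correct and is precisely the classical Jurdjevic--Sussmann argument (accessibility from the Lie rank condition, the closed attainable subsemigroup of a compact connected group being the whole group, then a compactness argument for a uniform time) that the paper invokes by simply citing the literature instead of giving details. The only nitpick is the parenthetical on $N=1$: there $\mathfrak{su}(1)=\{0\}$ and $SU(1)$ does not act transitively on the circle, so the proposition implicitly assumes $N\geq 2$, consistent with the paper's remark that $\rho>0$ requires dimension at least two.
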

\begin{proof}
 We refer for instance to \cite{such}.
\end{proof}

For every admissible control $u$, define $v:t\mapsto \int_0^t u(\tau)\mathrm{d}\tau$ and  $y:t\mapsto \exp \left (-v (t) B \right )x$. The dynamics of $y$ is given by
\begin{equation}
\frac{d}{dt}y=e^{-v(t) B}  A e^{v(t) B}y :=dR_{\mathrm{Ad}_{e^{v(t)B} }A }y
\end{equation}
The adjoint mapping $\mathrm{Ad}$ is an isometry for the Killing norm. In other words,  the derivative of $y$ has a constant Killing norm (equal to the Killing norm of $A$).  As a consequence, the temporal diameter is positive as soon as the torus $\{e^{KB}|K\in \mathbf{R}$ does not fill $SU(n)$. 
\begin{proposition}
When $H$ is finite dimensional with dimension larger than or equal to 2, $\rho>0$. 
\end{proposition}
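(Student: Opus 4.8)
The plan is to exploit the change of variables $y=e^{-vB}x$ introduced just above the statement, whose crucial consequence is that $y$ is a curve in $SU(N)$ whose velocity has constant Killing norm, equal to the Killing norm $\|A\|_K$ of $A$. First I would record that, since $v(0)=0$, the curve starts at $y(0)=x(0)=I_H$, and that the length of $y$ restricted to $[0,t]$ for the bi-invariant Killing metric is $\int_0^t\|A\|_K\,\mathrm{d}\tau=\|A\|_K\,t$. Since path length dominates geodesic distance, the Killing distance satisfies $d_K(y(t),I_H)\le\|A\|_K\,t$, so for \emph{every} admissible control the propagator $y(t)$ stays in the closed geodesic ball of radius $\|A\|_K t$ about the identity.

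Next I would push this bound back to $x$ and then to the sphere. Write $T_B:=\overline{\{e^{KB}\mid K\in\mathbf{R}\}}$ for the compact abelian subgroup generated by $B$. Since $x(t)=e^{v(t)B}y(t)$ with $e^{v(t)B}\in T_B$, and the Killing metric is bi-invariant while $T_B$ is a subgroup stable under left translation by $e^{v(t)B}$, we get $d_K(x(t),T_B)=d_K(y(t),T_B)\le d_K(y(t),I_H)\le\|A\|_K\,t$. Thus every reachable propagator at time $\le T$ lies within Killing distance $\|A\|_K T$ of the torus $T_B$. Because the evaluation map $g\mapsto g\phi_1$ from $SU(N)$ to $\mathbf{S}_H$ is smooth, hence $L$-Lipschitz for some $L>0$ from $(SU(N),d_K)$ to $(\mathbf{S}_H,\|\cdot\|)$, it follows that every state in $\mathcal{R}_{<T}(\phi_1)$ lies in the $L\|A\|_K T$-neighborhood, for the Hilbert norm, of the orbit $\mathcal{O}:=\overline{T_B\,\phi_1}$.

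Then I would show $\mathcal{O}$ is a proper subset of $\mathbf{S}_H$ by a dimension count: $T_B$ is contained in a maximal torus, so $\dim T_B\le N-1$ and $\dim\mathcal{O}\le N-1$, strictly less than $\dim\mathbf{S}_H=2N-1$. Hence $\mathcal{O}$ is compact and nowhere dense, so there is $p^\ast\in\mathbf{S}_H$ with $\delta:=\mathrm{dist}(p^\ast,\mathcal{O})>0$. Choosing $T$ with $L\|A\|_K T<\delta$, the closed $L\|A\|_K T$-neighborhood of $\mathcal{O}$ contains $\overline{\mathcal{R}_{<T}(\phi_1)}$ but misses $p^\ast$, so $\overline{\mathcal{R}_{<T}(\phi_1)}\ne\mathbf{S}_H$; this gives $\rho\ge\delta/(L\|A\|_K)>0$.

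The main obstacle is not the constant-speed estimate, which is handed to us by the $\mathrm{Ad}$-isometry argument, but the transfer from the group $SU(N)$ to the sphere $\mathbf{S}_H$ on which $\rho$ is defined: one must verify that confining the propagators to a shrinking neighborhood of $T_B$ forces the states to stay near the orbit $\mathcal{O}$, and that $\mathcal{O}$ genuinely fails to fill the sphere. The Lipschitz evaluation map handles the first point, and the dimension count $\dim\mathcal{O}\le N-1<2N-1=\dim\mathbf{S}_H$ — where the hypothesis $N=\dim H\ge 2$ enters, the case $N=1$ being degenerate since the reduced operators then vanish — handles the second.
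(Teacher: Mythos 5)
Your overall strategy is exactly the paper's: the paper offers no separate proof for this proposition, but derives it from the immediately preceding computation --- the change of variables $y=e^{-v(t)B}x$, the fact that $\mathrm{Ad}$ is a Killing isometry so $y$ moves at constant Killing speed $\|A\|_K$, and the conclusion that in small time the propagator cannot leave a small neighborhood of the torus $\overline{\{e^{KB}\}}$, which does not fill the group. Your length-versus-distance bound, the bi-invariance argument giving $d_K(x(t),T_B)=d_K(y(t),T_B)\leq \|A\|_K t$, the Lipschitz evaluation map onto $\mathbf{S}_H$, and the nowhere-density of a compact orbit of deficient dimension are all correct rigorizations of that sketch.

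There is, however, one genuine gap: the reduction to $SU(N)$ by subtracting $\mathrm{Trace}(A)/N$ and $\mathrm{Trace}(B)/N$ is \emph{not} innocuous for a statement about $\rho$ on the sphere, because the global phase is part of the state there. The true propagator is $e^{\mathrm{i}(t\,\mathrm{Trace}(A)+v(t)\,\mathrm{Trace}(B))/N}\,x(t)$ with $x(t)\in SU(N)$, and since $v(t)=\int_0^t u$ is unconstrained, the factor $e^{\mathrm{i}v(t)\mathrm{Trace}(B)/N}$ can sweep the entire phase circle in arbitrarily small time whenever $\mathrm{Trace}(B)\neq 0$. So your containment of $\mathcal{R}_{<T}(\phi_1)$ in an $L\|A\|_K T$-neighborhood of $\mathcal{O}=\overline{T_B\phi_1}$ (with $T_B\subset SU(N)$) is false in general; indeed your argument as written would ``prove'' $\rho>0$ for $N=1$ with scalar $B\neq 0$, where in fact $\rho=0$ --- a sign that the $SU(N)$ reduction discards reachable directions, not merely that $N=1$ is ``degenerate.'' The fix is cheap: work in $U(N)$ with the original $B$, or equivalently enlarge $T_B$ by the central circle $\{e^{\mathrm{i}\theta}I\}$; the orbit then has dimension at most $N$ (the $\mathrm{Trace}(A)$ phase is a bounded drift, absorbed into the constant), and $N<2N-1$ precisely when $N\geq 2$ --- which is where the hypothesis genuinely enters, rather than in your count $N-1<2N-1$, which holds for all $N$. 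Note also that the paper's Remark offers a route that sidesteps this entirely: in finite dimension $A$ is automatically bounded, so the finite-dimensional versions of Propositions \ref{PRO_tore_pas_dense} and \ref{PRO_min_diam_inf_A_borne} give $\rho\geq\delta/\|A\|>0$ directly, with the torus $\{e^{KB}\}$ built from the full, un-normalized $B$, so no phase bookkeeping is needed.
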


\subsection{Some time estimates} \label{SEC_finite_dim_estimates}
In this Section, we give an estimate of the time needed to steer an eigenstate of $A$ to another. The proof relies on averaging techniques (see \cite{periodic} for details).
\begin{definition}
Let $H=\mathbf{C}^N$ and  $(A^{(N)},B^{(N)})$ satisfy Assumption \ref{ASS_1}.
A couple $(j,k)$ is a \emph{non-degenerate} transition of $(A^{(N)},B^{(N)})$ if  i) $\langle \phi_j, B^{(N)} \phi_k \rangle\neq 0$ and ii) $\lambda_j-\lambda_k=\lambda_l-\lambda_m$ implies that
 $\{j,k\}=\{l,m\}$ or $\{l,m\}\cap \{j,k\}=\emptyset$ or $\langle \phi_l, B^{(N)} \phi_m \rangle=0$.
\end{definition}

\begin{proposition}\label{PRO_cosinus}
Let $(A^{(N)},B^{(N)})$ be a couple of skew-Hermitian matrices of order $N$. Assume that $A^{(N)}$ is diagonal and that
$(1,2)$ is a non degenerate transition of $(A^{(N)},B^{(N)})$. We denote with $X^u_{(N)}$ the propagator of $x'=(A^{(N)}+uB^{(N)})x$.
Define $T=\frac{2\pi}{|\lambda_2-\lambda_1|}$ and $u^{\ast}:t\mapsto |\lambda_2-\lambda_1|\cos((\lambda_2-\lambda_1)t + \phi)$.
Then, for every $n$ in $\mathbf{N}$, for every $t$
\begin{eqnarray*}
\frac{\|X^{u^{\ast}/n}_{(N)}(t,0)-e^{tA^{(N)}}e^{K M^{\dag}}\|}{I(C+1)  \|B^{(N)}\|} \leq
\frac{ 1 + 2K \|B^{(N)}\|}{n}
\end{eqnarray*}
with $I=\int_0^T|u^{\ast}(\tau)\mathrm{d}\tau=4$, $K=\frac{1}{n}\int_0^t|u^{\ast}(t)|\mathrm{d}t$, 
$M^{\dag}$ the skew-Hermitian matrix of order $N$ which entries are all zero but the ones of index $(1,2)$ and $(2,1)$ equal to $\pi b_{12} e^{\mathrm{i} \phi} /4$ and $\pi b_{21}e^{-\mathrm{i}\phi}/4$ respectively and
$$  C=\sup_{(j,k)\in \Lambda} \left | \frac{\int_0^T u^{\ast}(\tau) e^{\mathrm{i} (\lambda_j-\lambda_k)\tau}\mathrm{d}\tau}{\sin \left ( \pi\frac{|\lambda_j-\lambda_k|}{|\lambda_2-\lambda_1|} \right )} \right |,$$
where $\Lambda$ is the set of all pairs $(j,k)$ in  $\{1,\ldots,N\}^2$  such that $b_{jk} \neq 0$ and $\{j,k\}\cap\{1,2\} \neq \emptyset$ and $ |\lambda_j-\lambda_k|\notin \mathbf{Z}|\lambda_2-\lambda_1|$.
\end{proposition}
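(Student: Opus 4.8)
The plan is to pass to the interaction picture and then run a first-order averaging argument with an explicit error bound. Writing $A=A^{(N)}$, $B=B^{(N)}$, I would set $Y(t)=e^{-tA}X^{u^{\ast}/n}_{(N)}(t,0)$, which solves $\dot Y=\frac1n u^{\ast}(t)W(t)Y$ with $Y(0)=I$, where $W(t)=e^{-tA}Be^{tA}$ has entries $W_{jk}(t)=b_{jk}e^{\mathrm{i}(\lambda_k-\lambda_j)t}$. Since $e^{tA}$ is unitary, $\|X^{u^{\ast}/n}_{(N)}(t,0)-e^{tA}e^{KM^{\dag}}\|=\|Y(t)-e^{KM^{\dag}}\|$, so it suffices to compare $Y$ with $Z(t):=e^{KM^{\dag}}$. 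As $M^{\dag}$ is skew-Hermitian and $K$ real, $Z$ is unitary and solves $\dot Z=\frac1n|u^{\ast}(t)|M^{\dag}Z$, $Z(0)=I$.

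Next I would identify $M^{\dag}$ as the resonant average of $u^{\ast}W$. Expanding the cosine in $u^{\ast}$, the $(j,k)$ entry of $u^{\ast}W$ is a combination of exponentials of frequencies $(\lambda_k-\lambda_j)\pm(\lambda_2-\lambda_1)$; such a term is resonant (constant) exactly when $\lambda_k-\lambda_j=\pm(\lambda_2-\lambda_1)$. The non-degeneracy of the transition $(1,2)$ guarantees that the only entries meeting $\{1,2\}$ that are both resonant and have $b_{jk}\neq0$ are $(1,2)$ and $(2,1)$, and a direct computation of the per-period integral $\int_0^T u^{\ast}(\tau)W_{12}(\tau)\,\mathrm{d}\tau=\pi b_{12}e^{\pm\mathrm{i}\phi}$ together with $\int_0^T|u^{\ast}|=I=4$ shows that $|u^{\ast}|M^{\dag}$ is precisely the resonant part of $u^{\ast}W$ on a period. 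I would then introduce the fluctuation $P(t)=\int_0^t\big(u^{\ast}(\tau)W(\tau)-|u^{\ast}(\tau)|M^{\dag}\big)\,\mathrm{d}\tau$, whose resonant entries have, by this construction, zero mean over each period.

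\textbf{The main obstacle} is to prove that $P$ stays bounded uniformly in $t$, with the explicit constant carried by $C$. Splitting $[0,t]$ into full periods plus a remainder, the resonant entries contribute at most one period's integral, of size $O(I\|B\|)$. For every other entry $M^{\dag}_{jk}=0$, so $P_{jk}(t)=b_{jk}\int_0^t u^{\ast}e^{\mathrm{i}(\lambda_k-\lambda_j)\tau}\,\mathrm{d}\tau$; here the contributions of successive periods form a geometric series of ratio $e^{\mathrm{i}(\lambda_k-\lambda_j)T}$, whose partial sums are bounded by $1/\big(2|\sin(\pi|\lambda_j-\lambda_k|/|\lambda_2-\lambda_1|)|\big)$. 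This is exactly where the denominators defining $C$ (and the index set $\Lambda$) arise, and it is the step where the non-degeneracy hypothesis is indispensable, since it ensures that every surviving resonance is already cancelled by $|u^{\ast}|M^{\dag}$. Collecting the two cases yields a bound of the form $\|P(t)\|\le I(C+1)\|B\|$.

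Finally I would close the estimate by variation of constants and a Gronwall argument. Writing $D=Y-Z$, one gets $\dot D=\frac1n u^{\ast}WD+\frac1n\dot P\,Z$; integrating and integrating the forcing term by parts, using $\dot Z=\frac1n|u^{\ast}|M^{\dag}Z$ and $\|Z\|=1$, gives $\frac1n\int_0^t\dot P\,Z=\frac1n\big(P(t)Z(t)-\frac1n\int_0^t P|u^{\ast}|M^{\dag}Z\big)$, bounded by $\frac{\|P\|_\infty}{n}(1+K\|M^{\dag}\|)$. Hence $\|D(t)\|\le\frac{\|P\|_\infty}{n}(1+K\|M^{\dag}\|)+\frac{\|B\|}{n}\int_0^t|u^{\ast}|\,\|D\|$, and Gronwall's lemma together with $\|M^{\dag}\|\le\|B\|$ and $\frac1n\int_0^t|u^{\ast}|=K$ turns this into $\|Y(t)-e^{KM^{\dag}}\|\le\frac{I(C+1)\|B\|}{n}(1+2K\|B\|)$, which is the claim. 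The routine parts are the trigonometric computation of the per-period integrals and the bookkeeping of constants; the conceptual core is the uniform control of $P$ via the $1/\sin$ estimate.
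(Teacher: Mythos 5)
Your overall strategy is sound and is, in substance, the right one: the paper itself does not prove Proposition~\ref{PRO_cosinus} but simply invokes inequality (13) of \cite{periodic}, and your interaction-picture argument --- conjugating by $e^{-tA^{(N)}}$, identifying $M^{\dag}$ through the per-period integrals $\int_0^T u^{\ast}W_{12}=\pi b_{12}e^{\pm\mathrm{i}\phi}$ and $I=\int_0^T|u^{\ast}|=4$, and bounding the primitive $P$ of the oscillating part by partial geometric sums with the $1/\left|\sin\left(\pi|\lambda_j-\lambda_k|/|\lambda_2-\lambda_1|\right)\right|$ denominators --- is exactly the mechanism that produces the constants $I$, $C$ and the index set $\Lambda$ in the cited reference.

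There is, however, a genuine gap in your closing step. From $\|D(t)\|\le\frac{\|P\|_\infty}{n}\left(1+K\|M^{\dag}\|\right)+\frac{\|B^{(N)}\|}{n}\int_0^t|u^{\ast}|\,\|D\|$, Gronwall's lemma gives $\|D(t)\|\le\frac{\|P\|_\infty}{n}\left(1+K\|M^{\dag}\|\right)e^{K\|B^{(N)}\|}$, and since $e^{x}(1+x)>1+2x$ for every $x>0$, this is \emph{strictly weaker} than the claimed factor $1+2K\|B^{(N)}\|$ for every $K>0$: no bookkeeping turns the exponential into the stated linear dependence in $K$, and that linearity is precisely what keeps the constants of Proposition~\ref{PRO_majoration_temps} under control in the application (there $K_N\le 4$, $\|B\|=\sqrt{2}/2$, so the exponential would inflate the bound by roughly an order of magnitude). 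The fix is to exploit unitarity instead of Gronwall: since $M^{\dag}$ commutes with $Z(t)=e^{K(t)M^{\dag}}$ and $Z$ is unitary, one has $\frac{\mathrm{d}}{\mathrm{d}t}\left(Z^{-1}Y\right)=\frac{1}{n}Z^{-1}\dot{P}\,Y$; integrating by parts and using $\|\dot{Y}\|\le\frac{|u^{\ast}|}{n}\|B^{(N)}\|$ and $\left\|\frac{\mathrm{d}}{\mathrm{d}t}Z^{-1}\right\|\le\frac{|u^{\ast}|}{n}\|M^{\dag}\|$ yields $\|Y(t)-Z(t)\|=\|Z^{-1}Y-I\|\le\frac{\|P\|_\infty}{n}\left(1+K(\|M^{\dag}\|+\|B^{(N)}\|)\right)\le\frac{\|P\|_\infty}{n}\left(1+2K\|B^{(N)}\|\right)$, because $\|M^{\dag}\|=\pi|b_{12}|/4\le\|B^{(N)}\|$ --- no exponential appears. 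A secondary caveat: your bound $\|P\|_\infty\le I(C+1)\|B^{(N)}\|$ is argued entrywise and only for the two cases you list, but pairs $(j,k)$ with $b_{jk}\neq0$ and $\{j,k\}\cap\{1,2\}=\emptyset$ lie outside $\Lambda$ (the non-degeneracy hypothesis expressly permits resonances disjoint from $\{1,2\}$), so the passage from entrywise estimates to the operator norm requires the accounting carried out in \cite{periodic} rather than merely ``collecting the two cases''.
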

\begin{proof}
This is a particular case (for $u^{\ast}:t\mapsto |\lambda_2-\lambda_1|\cos((\lambda_2-\lambda_1)t + \phi)$ of the inequality (13) in \cite{periodic}.
\end{proof}

\subsection{A technical computation}\label{SEC_exponential}

In order to apply Proposition \ref{PRO_cosinus}, we will have to find $K$ and $\phi$ such that $e^{K M^{\dag}}$ sends a given vector to another one.

For a given $(\alpha,\beta)^T \in \mathbf{C}^2$, we  will need $r$ and $\phi$ in $\mathbf{R}$ such that
$e^M  (\alpha,\beta)^T$ is colinear to $(0,1)^T$ with
$$M=\left ( \begin{array}{cc}0 & r  e^{\mathrm{i}\phi} \\ r  e^{-\mathrm{i}\phi} & 0 \end{array} \right ).$$

For every $s$ in $\mathbf{R}$,
$$\exp(tM)=\left ( \begin{array}{cc} \cos(rt) & e^{\mathrm{i}\phi} \sin(rt) \\ -e^{\mathrm{i}\phi}\sin(rt) & \cos(rt) \end{array} \right ).$$
For every $(\alpha,\beta)^T \in \mathbf{C}^2$,
$$\exp(tM) \left ( \begin{array}{c} \alpha\\ \beta \end{array} \right)=
\left ( \begin{array}{c} \alpha \cos(rt) +e^{\mathrm{i}\phi}  \sin(rt) \beta \\ \beta \cos(rt) -e^{-\mathrm{i}\phi} \alpha \sin(sa)  \end{array} \right).$$

In the following, we assume without lost of generality that $|\alpha|^2+|\beta|^2=1$.
There exist $\theta$ in $[0,\pi/2]$, $\alpha_1,\beta_1$ in $(-\pi,\pi]$ such that
\begin{eqnarray}
 \alpha&=&\cos \theta e^{\mathrm{i}\alpha_1}\\
\beta&=&\sin \theta e^{\mathrm{i}\beta_1}.
\end{eqnarray}
With these notations,
$\alpha \cos(rt) + e^{\mathrm{i}\phi} \beta \sin(rt)=0$ if and only if
$$
\left \{ \begin{array}{lcl}
          \cos \left (\frac{\alpha_1-\beta_1-\phi}{2} \right ) \cos(\theta-rt)&=&0\\
          \sin \left (\frac{\alpha_1-\beta_1-\phi}{2} \right ) \cos(\theta+rt)&=&0
	  \end{array}
\right.
$$
Since we are interested in small $t$, we will chose $\phi$ and $t$ such that
$$
\left \{
    \begin{array}{lcl}
	  \cos \left (\frac{\alpha_1-\beta_1-\phi}{2} \right )\!\!\! &=&0\!\!\!\\
	  \cos(\theta+rt)\!\!\!&=&0\!\!\!
    \end{array}
\right.
\mbox{that is,}
\left \{
    \begin{array}{lcl}
	  \frac{\alpha_1-\beta_1-\phi}{2}  &=&\frac{\pi}{2}(\pi)\\
	  \theta+rt&=&\frac{\pi}{2}(\pi).
    \end{array}
\right.
$$

\section{INFINITE DIMENSIONAL TOOLS} \label{SEC_infinite_dim}

\subsection{Notations}
Let $(A,B)$ satisfy Assumption \ref{ASS_1}. For every $N$ in $\mathbf{N}$, we define $\mathcal{L}_N$ the linear space spanned by $\phi_1,\phi_2, \ldots,\phi_N$ and $\pi_N:H\rightarrow H$, the orthogonal projection onto $\mathcal{L}_N$:
$$
\pi_N(\psi)=\sum_{k=1}^N \langle \phi_k, \psi \rangle \phi_k.
$$
The compressions of order $N$ of $A$ and $B$ are the finite rank operators 
$A^{(N)}=\pi_N A_{{\upharpoonright \mathcal{L}_N}}$ and
$B^{(N)}=\pi_N B_{{\upharpoonright \mathcal{L}_N}}$. The Galerkin approximation of (\ref{EQ_main}) at order $N$ is the infinite dimensional system
\begin{equation}\label{EQ_Galerkin}
 \frac{\mathrm{d}}{\mathrm{d}t}x=A^{(N)}x +u(t) B^{(N)} x.
\end{equation}
Since $\mathcal{L}_N$ is invariant by (\ref{EQ_Galerkin}), one may also consider (\ref{EQ_Galerkin}) as a finite-dimensional system, whose propagator is denoted by $X^u_{(N)}(t,s)$.

The operator $|A|$ is a positive self-adjoint operator. In the case where $A$ is injective, we define for every $k\geq 0$ the norm $k$-norm: $\|\psi\|_k=\||A|^k \psi\|$. 

\subsection{Basic facts}\label{SEC_basic_facts}

\begin{proposition}\label{PRO_tore_pas_dense}
Assume that $H$ has infinite dimension. Then, for every $\psi_0$ in $\mathbf{S}_H$, $\overline{\{e^{KB}\psi_0,K \in \mathbf{R} \}} \neq \mathbf{S}_H$ .
\end{proposition}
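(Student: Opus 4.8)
The plan is to exploit the spectral structure of the one-parameter unitary group generated by $B$ and to isolate a quantity that is both continuous on $H$ and constant along the orbit $K\mapsto e^{KB}\psi_0$. Since $B$ is essentially skew-adjoint (Assumption \ref{ASS_1}), Stone's theorem lets me write $e^{KB}=e^{\mathrm{i}KS}$ for a self-adjoint operator $S=-\mathrm{i}\overline{B}$ with projection-valued spectral measure $E$. To each $\psi\in\mathbf{S}_H$ I associate the scalar spectral measure $\mu_\psi(\Omega)=\|E(\Omega)\psi\|^2$, a Borel probability measure on $\mathbf{R}$. The heuristic is that $e^{\mathrm{i}KS}$ merely multiplies the spectral components of $\psi_0$ by unimodular phases $e^{\mathrm{i}K\lambda}$, so it cannot change their moduli: the whole orbit carries the same spectral measure as $\psi_0$.

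First I would make this precise. Because $E(\Omega)$ is a bounded function of $S$, it commutes with $e^{\mathrm{i}KS}$, whence $\mu_{e^{KB}\psi_0}(\Omega)=\|E(\Omega)e^{\mathrm{i}KS}\psi_0\|^2=\|e^{\mathrm{i}KS}E(\Omega)\psi_0\|^2=\|E(\Omega)\psi_0\|^2=\mu_{\psi_0}(\Omega)$ for every $K$ and every Borel set $\Omega$. Next, for fixed $\Omega$ the map $\psi\mapsto\|E(\Omega)\psi\|^2$ is continuous on $H$ (as $\|E(\Omega)\|\leq 1$), so if $\eta$ is a limit of points $e^{K_nB}\psi_0$ then $\mu_\eta(\Omega)=\lim_n\mu_{e^{K_nB}\psi_0}(\Omega)=\mu_{\psi_0}(\Omega)$. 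Hence $\overline{\{e^{KB}\psi_0 : K\in\mathbf{R}\}}\subseteq\Sigma:=\{\eta\in\mathbf{S}_H : \mu_\eta=\mu_{\psi_0}\}$, and it remains to show $\Sigma\neq\mathbf{S}_H$.

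To produce a unit vector outside $\Sigma$ I would distinguish whether $S$ is scalar. If $S=\lambda_0 I$, then $e^{KB}\psi_0=e^{\mathrm{i}K\lambda_0}\psi_0$ and the orbit is contained in the circle $\{c\psi_0 : |c|=1\}$, a proper closed subset of $\mathbf{S}_H$ as soon as $\dim H\geq 2$; since $H$ is infinite dimensional, any unit $\eta\perp\psi_0$ finishes this case. Otherwise $\sigma(S)$ contains at least two points, so there exist disjoint Borel sets $\Omega_1,\Omega_2$ with $E(\Omega_1)\neq 0\neq E(\Omega_2)$. Choosing unit vectors $f_1\in\mathrm{Ran}\,E(\Omega_1)$ and $f_2\in\mathrm{Ran}\,E(\Omega_2)$ gives $\mu_{f_1}(\Omega_2)=0$ while $\mu_{f_2}(\Omega_2)=1$; these two values cannot both equal $\mu_{\psi_0}(\Omega_2)$, so at least one of $f_1,f_2$ lies in $\mathbf{S}_H\setminus\Sigma$, proving the claim.

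The invariance and continuity of $\mu_\psi$ are immediate from the spectral theorem; the only point requiring care is the final assertion $\Sigma\neq\mathbf{S}_H$, which is where the presence of genuine spectrum enters. I expect this proper-subset step to be the main obstacle, and its delicate case is precisely when $\psi_0$ is an eigenvector of $S$ (so $\mu_{\psi_0}$ is a point mass and the orbit degenerates to a circle): there \emph{every} unit vector shares the spectral measure when $S$ is scalar, so the invariant can no longer separate the orbit from anything, and one must instead use directly that the orbit is a single circle in an infinite dimensional space. All remaining configurations reduce to exhibiting a vector supported on a different part of the spectrum, as above.
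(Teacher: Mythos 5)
Your proof is correct, and while it rests on the same spectral-theoretic core as the paper's, it takes a recognizably different route. The paper invokes the spectral theorem in \emph{multiplication-operator form}: up to a unitary, $H=L^2(\Omega,\mathbf{C})$ and $B$ is multiplication by a purely imaginary function, so the orbit $\{e^{KB}\psi_0 : K\in\mathbf{R}\}$ preserves the pointwise modulus $|\psi_0(x)|$ almost everywhere, and the closed set of unit vectors with that fixed modulus profile is a proper subset of $\mathbf{S}_H$. You instead work with the projection-valued measure of $S=-\mathrm{i}\overline{B}$ and take the scalar spectral measure $\mu_\psi$ as your orbit invariant. Your invariant is strictly coarser than the paper's: in the multiplication picture, $\mu_\psi$ is the pushforward of the density $|\psi|^2$ under the multiplier function, so distinct modulus profiles can share a spectral measure. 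This is precisely why you are forced into the case split you correctly identified as the delicate point --- when $S=\lambda_0 I$ every unit vector has the same spectral measure and your invariant separates nothing, so you must fall back on the orbit being a circle, whereas the paper's pointwise-modulus invariant covers the constant-multiplier case with no special treatment. What your route buys in exchange: it avoids choosing a concrete $L^2$ representation (no almost-everywhere bookkeeping over an abstract measure space), it makes the two facts the paper leaves implicit fully explicit --- invariance via commutation of $E(\cdot)$ with $e^{\mathrm{i}KS}$, and continuity of $\psi\mapsto\|E(\cdot)\psi\|^2$ from $\|E(\cdot)\|\leq 1$, which justifies passing to the \emph{closure} of the orbit --- and your construction of $f_1,f_2$ supported on disjoint spectral sets spells out the final properness step that the paper compresses into the one-line remark that the fixed-modulus set ``is not equal to $\mathbf{S}_H$.'' Both arguments are sound; the paper's is shorter because its finer invariant renders the degenerate case invisible, while yours is more self-contained in its verification.
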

\begin{proof}
%
By the spectral theorem, up to a unitary transformation, $H=L^2(\Omega,\mathbf{C})$ with $\Omega$ a set of cardinality larger than two and $B$ is the multiplication by a purely imaginary function. Then, the set $\overline{\{e^{KB}\psi_0, K \in \mathbf{R}\}}$ is included in the set $\{\psi \in H \mbox{such that } |\psi(x)|=|\psi_0(x)| \mbox{for a. e.} x\in \Omega \}$. The latter set is not equal to $\mathbf{S}_H$. 
\end{proof}

\begin{proposition}\label{PRO_min_diam_inf_A_borne}
If  $H$ has infinite dimension and $A$ is bounded, then $\rho>0$.
\end{proposition}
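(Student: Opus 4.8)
The plan is to exploit the boundedness of $A$ to show that every admissible trajectory issued from $\phi_1$ stays, up to time $t$, within distance $\|A\|\,t$ (operator norm) of the torus $\mathcal{T}=\{e^{KB}\phi_1\,:\,K\in\mathbf{R}\}$, and then to invoke Proposition \ref{PRO_tore_pas_dense}, which asserts that $\mathcal{T}$ is not dense in $\mathbf{S}_H$. The strictly positive lower bound on $\rho$ will come from the positive gap between $\mathbf{S}_H$ and the closure of $\mathcal{T}$.

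First I would perform the same gauge transformation as in the finite-dimensional case. Fix a piecewise constant control $u$, set $v(t)=\int_0^t u(\tau)\,\mathrm{d}\tau$ and $\psi(t)=\Upsilon^u_t\phi_1$, and define $y(t)=e^{-v(t)B}\psi(t)$. Because $A$ is bounded we have $D(A)=H$, and since $\phi_1\in D(B)$ by Assumption \ref{ASS_1}, the vector $\psi(t)$ remains in $D(B)$ on each interval where $u$ is constant; the product rule then applies, and the two terms carrying $uB$ cancel exactly because $B$ commutes with $e^{-v(t)B}$. This yields
\begin{equation}
\frac{d}{dt} y(t) = e^{-v(t)B}\, A\, e^{v(t)B}\, y(t).
\end{equation}
The operator $e^{-v(t)B}A\,e^{v(t)B}$ is the conjugate of $A$ by a unitary, hence a bounded operator of the same operator norm $\|A\|$; since $t\mapsto y(t)$ evolves on the unit sphere, $\left\|\frac{d}{dt}y(t)\right\|\le \|A\|$. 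Integrating and applying the isometry $e^{v(t)B}$ gives
\begin{equation}
\left\| \Upsilon^u_t\phi_1 - e^{v(t)B}\phi_1 \right\| = \left\| y(t)-\phi_1 \right\| \le \|A\|\, t .
\end{equation}
As $e^{v(t)B}\phi_1\in\mathcal{T}$, this shows that $\mathcal{R}_{<T}(\phi_1)$ is contained in the closed $\|A\|\,T$-neighborhood of $\mathcal{T}$, equivalently of $\overline{\mathcal{T}}$ (distance to a set equals distance to its closure), and the bound is uniform in the control $u$.

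Finally I would conclude. If $A=0$ the reachable set from $\phi_1$ reduces to $\mathcal{T}$, which is never dense, so the conclusion holds a fortiori. If $A\neq 0$, Proposition \ref{PRO_tore_pas_dense} provides $\psi^\ast\in\mathbf{S}_H$ lying outside the closed set $\overline{\mathcal{T}}$, hence at some positive distance $d=\mathrm{dist}(\psi^\ast,\overline{\mathcal{T}})>0$. For every $T<d/\|A\|$ the point $\psi^\ast$ lies outside the closed $\|A\|\,T$-neighborhood of $\overline{\mathcal{T}}$, so $\psi^\ast\notin \overline{\mathcal{R}_{<T}(\phi_1)}$, whence $\overline{\mathcal{R}_{<T}(\phi_1)}\neq\mathbf{S}_H$. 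Therefore $\rho\ge d/\|A\|>0$.

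I expect the main obstacle to be the rigorous justification of the differentiation of $y$ in the infinite-dimensional setting, namely the invariance of $D(B)$ along the flow and the differentiability of $t\mapsto e^{-v(t)B}\psi(t)$; the boundedness of $A$ is precisely what makes this, together with the norm identity $\|e^{-vB}A\,e^{vB}\|=\|A\|$, go through cleanly, which is why the hypothesis is essential. By contrast, the passage from the qualitative non-density supplied by Proposition \ref{PRO_tore_pas_dense} to a strictly positive gap $d$ is immediate, since $\overline{\mathcal{T}}$ is closed.
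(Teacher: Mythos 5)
Your proposal is correct and follows essentially the same route as the paper: the same gauge transformation $y(t)=e^{-v(t)B}\Upsilon^u_t\phi_1$, the same speed bound $\|\tfrac{d}{dt}y(t)\|\leq\|A\|$ obtained from unitary conjugation of the bounded operator $A$, and the same appeal to Proposition \ref{PRO_tore_pas_dense} to get a positive gap and hence $\rho\geq d/\|A\|$. The only harmless deviations are that you bound the strong derivative of $y$ whereas the paper works coordinate-wise with $\tfrac{d}{dt}\langle\phi_k,Y^u_t\psi_0\rangle$ --- which incidentally sidesteps the $D(B)$-invariance issue you flag, since the unbounded operator lands on the fixed vector $\phi_k\in D(A)\cap D(B)$ --- and that you exclude a single point outside $\overline{\{e^{KB}\phi_1\,:\,K\in\mathbf{R}\}}$ rather than using two tori at positive distance from each other.
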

\begin{proof}
For every $u$ in $\mathcal U$, define $Y^u:t\mapsto e^{-\int_0^t|u(\tau)|\mathrm{d}\tau B} \Upsilon^u_t$.  For every $\psi_0$ in $\mathbf{S}_H$, for every $k$ in $\mathbf{N}$, the mapping $t \mapsto Y_t^u \psi_0$  satisfies
$$
\frac{\mathrm{d}}{\mathrm{d}t}\langle \phi_k, Y^u_t\psi_0\rangle = \langle e^{-\int_0^t|u(\tau)|\mathrm{d}\tau B} A e^{\int_0^t|u(\tau)|\mathrm{d}\tau B} \phi_k, Y_t^u \psi_0 \rangle.
$$
In particular, for every $\psi_0$, for every $t$,
$$
\left |\frac{\mathrm{d}}{\mathrm{d}t}\langle \phi_k, Y^u_t\psi_0\rangle \right |  \leq \|A \|.
$$ 
Consider now two points $\psi_0$ and $\psi_1$ in $\mathbf{S}_H$ such that the distance $\delta$ between the two sets  $\overline{\{e^{KB}\psi_0,K \in \mathbf{R}\}}$ and $\overline{\{e^{KB}\psi_1, K \in \mathbf{R}\}}$ is not zero (such a couple $(\psi_0,\psi_1)$ exists from Proposition \ref{PRO_tore_pas_dense}). Then $\rho\geq \delta/\|A\|$, since $\psi_1 \notin \overline{{\mathcal{R}}_{<t}\psi_0}$ for $t<\delta/\|A\|$.
\end{proof}

\begin{proposition}\label{PRO_min_rho_eigenvecotr}
If  $H$ has infinite dimension and  $B$ admits an eigenvector in the domain of $A$, then $\rho>0$.
\end{proposition}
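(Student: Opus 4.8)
The plan is to produce a continuous real-valued observable on $\mathbf{S}_H$ whose value along any trajectory of (\ref{EQ_main}) issued from $\phi_1$ can vary only at a bounded rate, and then to exhibit a target on which this observable differs by a fixed amount from its value at $\phi_1$. Let $\chi$ be the eigenvector of $B$ lying in $D(A)$, normalized so that $\|\chi\|=1$. Since $B$ is essentially skew-adjoint its eigenvalue is purely imaginary, say $B\chi=\mathrm{i}\mu\chi$ with $\mu\in\mathbf{R}$, and by hypothesis $\chi\in D(A)\cap D(B)$. Define $p:\mathbf{S}_H\to[0,1]$ by $p(\psi)=|\langle\chi,\psi\rangle|^2$; this map is continuous for the Hilbert norm. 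Note that the finite-dimensional estimates of Section~\ref{SEC_finite_dim_estimates} are not needed here: the argument is entirely soft and parallels the proof of Proposition~\ref{PRO_min_diam_inf_A_borne}, the boundedness of $A$ being replaced by the boundedness of $A$ on the single vector $\chi$.

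The first step is to estimate the rate of change of $p$ along trajectories. Fix a piecewise constant control $u$ and $\psi_0\in\mathbf{S}_H$, and set $g(t)=\langle\chi,\Upsilon^u_t\psi_0\rangle$. On each interval where $u$ equals a constant $c$, the evolution is the unitary group generated by the skew-adjoint closure of $A+cB$, and $\chi\in D(A)\cap D(B)$ belongs to the domain of that generator. Transposing the skew-adjoint generator onto $\chi$ (the standard fact that $t\mapsto\langle\chi,e^{t(A+cB)}\xi\rangle$ is differentiable for \emph{every} $\xi\in H$ when $\chi$ is in the domain) gives
$$g'(t)=-\langle (A+cB)\chi,\Upsilon^u_t\psi_0\rangle=-\langle A\chi,\Upsilon^u_t\psi_0\rangle+\mathrm{i}c\mu\,g(t).$$
The decisive point, which is where the eigenvector hypothesis enters, is that the $B$-contribution $\mathrm{i}c\mu\,g(t)$ is a purely imaginary multiple of $g(t)$ and hence cannot affect the modulus. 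Indeed,
$$\frac{\mathrm{d}}{\mathrm{d}t}p(\Upsilon^u_t\psi_0)=2\,\mathrm{Re}\big(\overline{g(t)}\,g'(t)\big)=-2\,\mathrm{Re}\big(\overline{g(t)}\,\langle A\chi,\Upsilon^u_t\psi_0\rangle\big),$$
so that, using $|g(t)|\le 1$ and $\|\Upsilon^u_t\psi_0\|=1$, we obtain $\big|\tfrac{\mathrm{d}}{\mathrm{d}t}p(\Upsilon^u_t\psi_0)\big|\le 2\,\|A\chi\|$. Since $g$ is continuous across the finitely many switching times, $t\mapsto p(\Upsilon^u_t\psi_0)$ is Lipschitz with constant $2\|A\chi\|$, whence $|p(\Upsilon^u_t\psi_0)-p(\psi_0)|\le 2\|A\chi\|\,t$ for all $t$.

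I would then conclude by a slab argument. Taking $\psi_0=\phi_1$, the estimate shows that $\mathcal{R}_{<T}(\phi_1)$, and therefore its closure (as $p$ is continuous), is contained in the closed slab $\{\psi\in\mathbf{S}_H:|p(\psi)-p(\phi_1)|\le 2\|A\chi\|\,T\}$. Since $p(\chi)=1$ and $p(\psi)=0$ for any unit vector orthogonal to $\chi$ (such a vector exists because $H$ is infinite dimensional), there is always a target $\psi_1\in\mathbf{S}_H$ with $|p(\psi_1)-p(\phi_1)|\ge 1/2$: take $\psi_1=\chi$ if $p(\phi_1)\le 1/2$ and $\psi_1\perp\chi$ otherwise. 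For $T<1/(4\|A\chi\|)$ this target lies outside the slab, so $\overline{\mathcal{R}_{<T}(\phi_1)}\neq\mathbf{S}_H$ and hence $\rho\ge 1/(4\|A\chi\|)>0$ (if $A\chi=0$ the modulus $p$ is exactly conserved and $\rho=+\infty$). The only genuine obstacle is the differentiation of $t\mapsto\langle\chi,\Upsilon^u_t\psi_0\rangle$ in the presence of the unbounded drift: the trajectory need not lie in $D(A)$, so the derivative must be computed by moving $A$ onto the fixed vector $\chi$, which is legitimate precisely because $\chi\in D(A)\cap D(B)$; the eigenvector assumption then supplies the cancellation making the $B$-term modulus-preserving.
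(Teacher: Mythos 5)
Your proof is correct and takes essentially the same approach as the paper's: both exploit the fact that the eigenvector hypothesis turns the $B$-contribution into a pure phase on the coordinate $\langle\chi,\cdot\rangle$, yielding a Lipschitz bound of order $\|A\chi\|$ on $t\mapsto|\langle\chi,\Upsilon^u_t\psi_0\rangle|$ and hence a positive lower bound on $\rho$. The only (cosmetic) difference is that the paper removes the $B$-dynamics by conjugating the flow with the unitary group of $B$, whereas you differentiate $|\langle\chi,\Upsilon^u_t\psi_0\rangle|^2$ directly and note that the purely imaginary term drops out; your justification of the differentiation by moving the generator onto the fixed vector $\chi$ is in fact more careful than the paper's.
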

\begin{proof}
For every $u$ in $\mathcal U$, define as above $Y^u:t\mapsto e^{-\int_0^t|u(\tau)|\mathrm{d}\tau B} \Upsilon^u_t$.  Let $v$ be an eigenvector of $B$ associated with eigenvalue $\lambda$. For every $\psi_0$ in $\mathbf{S}_H$,  the mapping $t \mapsto Y_t^u \psi_0$  satisfies
$$
\frac{\mathrm{d}}{\mathrm{d}t}\langle v, Y^u_t\psi_0\rangle = \langle  A v, Y_t^u \psi_0 \rangle.
$$  
Fix $\psi_0,\psi_1$ in $\mathbf{S}_H$ such that $|\langle v, \psi_0 \rangle | \neq |\langle v, \psi_1 \rangle |$. Then $\rho>\delta/\|Av\|$ since  $\psi_1 \notin \overline{{\mathcal{R}}_{<t}\psi_0}$ 
for $t<\delta/\|Av\|$.
\end{proof}

\begin{remark}
 Propositions \ref{PRO_tore_pas_dense}, \ref{PRO_min_diam_inf_A_borne} and \ref{PRO_min_rho_eigenvecotr} are true also when $H$ has finite dimension larger than or equal to two.
\end{remark}

\subsection{The RAGE theorem}\label{SEC_RAGE}
We define $\mathcal{L}_B$ the linear space spanned by the eigenvectors (if any) of $B$. 
We define $H_{B}=\mathcal{L}_B^{\perp}$
\begin{thm}\label{TH_RAGE}
Let $N$ in $\mathbf{N}$ and $\psi_0$ in $H_{B}$.  Then there exists a sequence 
$(k_n)_{n\in \mathbf{N}}$ with limit $+\infty$ such that $\| \pi_N e^{k_n B} \psi_0 \|$ tends to zero as $n$ tends to infinity.
\end{thm}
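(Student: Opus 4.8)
The plan is to deduce this from the classical RAGE theorem, which describes the long-time average behavior of the orbit of a unitary group under a compact observable. The key structural fact is that $\psi_0 \in H_B = \mathcal{L}_B^{\perp}$ means $\psi_0$ lies in the continuous spectral subspace of the (essentially skew-adjoint) operator $B$; equivalently, writing $B = \mathrm{i} C$ with $C$ self-adjoint, $\psi_0$ is orthogonal to every eigenvector of $C$, so the spectral measure $\mu_{\psi_0}$ associated to $\psi_0$ and $C$ has no atoms (it is purely continuous).

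First I would rewrite $\| \pi_N e^{k B} \psi_0 \|^2 = \langle e^{kB}\psi_0, \pi_N e^{kB}\psi_0\rangle$ and observe that $\pi_N$ is a finite-rank, hence compact, self-adjoint operator. Since $(e^{kB})_{k \in \mathbf{R}}$ is the strongly continuous unitary group generated by $B$ and $\psi_0$ belongs to the continuous subspace, the RAGE theorem (in its time-continuous form) gives
\begin{equation}\label{EQ_RAGE_average}
\lim_{T \to +\infty} \frac{1}{T}\int_0^T \| \pi_N e^{k B} \psi_0 \|^2 \, \mathrm{d}k = 0.
\end{equation}
The proof of \eqref{EQ_RAGE_average} itself reduces, by decomposing $\pi_N = \sum_{j=1}^N \langle \phi_j, \cdot\rangle \phi_j$, to showing that the Cesàro average of each $|\langle \phi_j, e^{kB}\psi_0\rangle|^2$ vanishes; expressing this inner product through the spectral measure and applying the Wiener theorem on the absence of atoms of a continuous measure yields the limit.

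Next I would convert the statement about the vanishing time-average \eqref{EQ_RAGE_average} into the existence of a sequence. Since the nonnegative integrand has Cesàro mean tending to zero, it cannot stay bounded below by a positive constant on any half-line; more precisely, for every $\varepsilon > 0$ and every $R > 0$ there must exist $k > R$ with $\| \pi_N e^{kB}\psi_0 \| < \varepsilon$, for otherwise the average in \eqref{EQ_RAGE_average} would be bounded below away from zero for large $T$. Choosing $\varepsilon = 1/n$ and $R = n$ recursively produces an increasing sequence $(k_n)_{n \in \mathbf{N}}$ with $k_n \to +\infty$ and $\| \pi_N e^{k_n B}\psi_0 \| \to 0$, which is exactly the claim.

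The main obstacle I anticipate is the functional-analytic setup rather than the combinatorial extraction of the sequence. One must be careful that $B$ is only \emph{essentially} skew-adjoint, so strictly speaking $e^{kB}$ refers to the group generated by its closure, and the notion of ``eigenvectors of $B$'' spanning $\mathcal{L}_B$ must be matched against the point spectrum of that closure; I would handle this by passing through the self-adjoint operator $C$ and its projection-valued spectral measure from the start. A secondary subtlety is justifying that $\psi_0 \in \mathcal{L}_B^{\perp}$ genuinely forces $\mu_{\psi_0}$ to be continuous (no atoms), which is immediate once one identifies atoms of the spectral measure with eigenvectors; the compactness of $\pi_N$ is what allows the single-vector Wiener estimate to be promoted to the operator statement \eqref{EQ_RAGE_average}.
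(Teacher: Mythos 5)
Your proposal is correct and follows essentially the same route as the paper, whose entire proof is a citation of the classical RAGE theorem (Reed--Simon, Theorem XI.115): you simply unfold that citation, deriving the vanishing Ces\`aro average via Wiener's theorem on atomless spectral measures and then extracting the sequence $(k_n)$ from the time average, which is the standard argument behind the cited result. Your attention to the subtleties (working with the skew-adjoint closure of $B$, identifying $\mathcal{L}_B^{\perp}$ with the continuous spectral subspace, and using finite rank of $\pi_N$ to reduce to single-vector estimates) is sound and, if anything, more careful than the paper's one-line proof.
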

\begin{proof}
This is a weak-version of the celebrated RAGE-theorem, see \cite[Theorem XI.115]{reed-simon-3}.
\end{proof}

\subsection{Weakly-coupled quantum systems}\label{SEC_wc}
 
\begin{definition}
Let $k$ be a positive number and let  $(A,B)$ satisfy Assumption \ref{ASS_1}
 and such that the spectrum of $\mathrm{i}A$ is purely discrete $(\lambda_k)_k$  and tends to $+\infty$ 
Then $(A,B)$ is
\emph{$k$ weakly-coupled}
if for every $u_1 \in \mathbf{R}$, $D(|A+u_1B|^{k/2})=D(|A|^{k/2})$ and
 there exists
a constant $c_{(A,B)}$ such that, for every $\psi$ in $D(|A|^k)$, $ |\Re \langle |A|^k
\psi,B \psi \rangle |\leq c_{(A,B)} |\langle |A|^k \psi, \psi \rangle|$.
\end{definition}
The notion of weakly-coupled systems is closely related to the growth of the
 $|A|^{k/2}$-norm $\langle |A|^k \psi, \psi \rangle$. For $k=1$, this quantity is the expected value of the energy of the system.
\begin{proposition}\label{PRO_croissance_norme_A} Let  $(A,B)$ be $k$-weakly-coupled.  Then,
for every $\psi_{0} \in D(|A|^{k/2})$, $K>0$,
$T\geq 0$, and $u$ in $L^1([0,\infty))$  for which
$\|u\|_{L^1}< K$, one has
$\left\|\Upsilon^{u}_{T}(\psi_{0})\right\|_{k/2} \leq
e^{c(A,B) K} \| \psi_0 \|_{k/2}.$
\end{proposition}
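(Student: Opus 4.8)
The plan is to run a Gr\"onwall (energy) argument on the quantity
$$
E(t) := \left\|\Upsilon^u_t\psi_0\right\|_{k/2}^2 = \langle |A|^k \Upsilon^u_t\psi_0,\, \Upsilon^u_t\psi_0\rangle,
$$
where the last equality uses that $|A|^{k/2}$ is self-adjoint. Since we only care about the closure of reachable sets and the input-output map is continuous, and since $D(|A|^k)$ is dense in $D(|A|^{k/2})$, I would first establish the bound for piecewise constant controls $u$ and for initial data $\psi_0$ in $D(|A|^k)$, then pass to general $\psi_0 \in D(|A|^{k/2})$ and general $u \in L^1$ by density and continuity. On each interval where $u \equiv u_1$ is constant, the solution is $\psi(t) = e^{t(A+u_1B)}\psi(t_0)$, and the hypothesis $D(|A+u_1B|^{k/2}) = D(|A|^{k/2})$ guarantees that the flow preserves the relevant domain, so $E$ is well defined and (on data in $D(|A|^k)$) differentiable along trajectories.

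The heart of the computation is the differentiation of $E$. Using $\dot\psi = (A + u(t)B)\psi$ and self-adjointness of $|A|^k$, one finds
$$
\frac{d}{dt} E(t) = 2\,\Re\langle |A|^k\psi,\, A\psi\rangle + 2\,u(t)\,\Re\langle |A|^k\psi,\, B\psi\rangle.
$$
The first (drift) term vanishes: expanding $\psi = \sum_j c_j\phi_j$ in the eigenbasis of $A$ gives $\langle |A|^k\psi, A\psi\rangle = \sum_j \mathrm{i}\,|\lambda_j|^k \lambda_j |c_j|^2$, which is purely imaginary, so its real part is zero. Equivalently, $|A|^k A$ is skew-adjoint because $|A|^k$ commutes with $A$. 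For the second term I would invoke the weakly-coupled estimate $|\Re\langle |A|^k\psi, B\psi\rangle| \le c_{(A,B)}\,|\langle |A|^k\psi,\psi\rangle| = c_{(A,B)}\,E(t)$, obtaining
$$
\left|\frac{d}{dt}E(t)\right| \le 2\,c_{(A,B)}\,|u(t)|\,E(t).
$$
Gr\"onwall's lemma then yields $E(T) \le E(0)\exp\!\big(2\,c_{(A,B)}\int_0^T|u(\tau)|\,d\tau\big) \le E(0)\,e^{2c_{(A,B)}K}$, and taking square roots gives exactly $\|\Upsilon^u_T\psi_0\|_{k/2} \le e^{c_{(A,B)}K}\|\psi_0\|_{k/2}$.

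The hard part is not the algebra but the analytic justification for unbounded $A$ and $B$: one must ensure that $t \mapsto \psi(t)$ takes values in $D(|A|^{k/2})$, that $E$ is absolutely continuous, and that the pairings $\langle|A|^k\psi, A\psi\rangle$ and $\langle|A|^k\psi, B\psi\rangle$ are finite and the formal differentiation is legitimate. I would control this by working first with strong solutions for data in $D(|A|^k)$ on each constant-control interval (where semigroup/functional-calculus arguments apply), use the equivalence of domains $D(|A+u_1B|^{k/2}) = D(|A|^{k/2})$ built into the definition to propagate regularity across the switching times of a piecewise constant control, and finally remove both the regularity assumption on $\psi_0$ and the piecewise-constant restriction on $u$ by a density/continuity limit, noting that the right-hand side $e^{c_{(A,B)}K}\|\psi_0\|_{k/2}$ depends on $u$ only through the bound $\|u\|_{L^1} < K$ and is therefore stable under such approximation.
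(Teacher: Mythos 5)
Your Gr\"onwall argument is correct and coincides with the paper's proof in substance: the paper itself only cites Proposition 2 of its reference on weakly-coupled systems, and the proof there is exactly this energy estimate --- the drift term $\Re\langle |A|^k\psi, A\psi\rangle$ vanishes because $|A|^k$ commutes with the skew-adjoint $A$, the interaction term is absorbed via the weak-coupling inequality $|\Re\langle |A|^k\psi,B\psi\rangle|\leq c_{(A,B)}\langle |A|^k\psi,\psi\rangle$, and Gr\"onwall plus a density/continuity step yields $e^{c_{(A,B)}K}$. The only thin spot is your suggestion that the domain equality $D(|A+u_1B|^{k/2})=D(|A|^{k/2})$ by itself propagates the stronger $D(|A|^k)$-regularity needed to differentiate $E(t)$ along the flow; the rigorous justification goes through a regularization of $|A|^k$ by bounded (spectral-truncation) operators before passing to the limit, but this is a technical repair within the same approach, not a different route.
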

\begin{proof}
 This is \cite[Proposition 2]{weakly-coupled}.
\end{proof}
\begin{proposition}\label{prop:gga}
Let $k$ and $s$ be non-negative  numbers with
$0\leq s <k$. Let $(A,B)$ 
 be $k$ weakly-coupled
Assume that there
exists $d>0$, $0\leq r<k$ such that $\|B\psi \|\leq d \|\psi \|_{r/2}$ 
 for
every $\psi$ in $D(|A|^{r/2})$.
Then
for every $\varepsilon > 0 $, $K\geq 0$, $n\in \mathbf{N}$, and
$(\psi_j)_{1\leq j \leq n}$ in $D(|A|^{k/2})^n$
there exists $N \in \mathbf{N}$
such that
for every piecewise constant function $u$
$$
\|u\|_{L^{1}} < K \Rightarrow\| \Upsilon^{u}_{t}(\psi_{j}) -
X^{u}_{(N)}(t,0)\pi_{N} \psi_{j}\|_{s/2} < \varepsilon,
$$
for every $t \geq 0$ and $j=1,\ldots,n$.
\end{proposition}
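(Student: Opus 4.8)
The plan is to combine a uniform a priori bound in the high norm $\|\cdot\|_{k/2}$ with convergence in the base norm $\|\cdot\|_0$, and then to interpolate in order to obtain convergence in the intermediate norm $\|\cdot\|_{s/2}$. Throughout I write $\psi(t)=\Upsilon^u_t\psi_j$ for the true trajectory and $\psi_N(t)=X^u_{(N)}(t,0)\pi_N\psi_j$ for the Galerkin trajectory, both well defined for piecewise constant $u$ by concatenation. Since only $\|u\|_{L^1}<K$ will enter the estimates, and never the length of the time interval, all bounds will automatically be uniform in $t\ge 0$.

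First I would establish that both trajectories are bounded in $\|\cdot\|_{k/2}$ uniformly in $N$, $t$ and $u$. For $\psi(t)$ this is exactly Proposition \ref{PRO_croissance_norme_A}. For $\psi_N(t)$ the point is that the compressions $(A^{(N)},B^{(N)})$ are again $k$-weakly-coupled with the \emph{same} constant $c_{(A,B)}$: because $|A|^k$ and $\pi_N$ are both diagonal in the basis $(\phi_m)_m$, for $\psi\in\mathcal{L}_N$ one has $|A^{(N)}|^k\psi=|A|^k\psi\in\mathcal{L}_N$ and $\Re\langle |A^{(N)}|^k\psi,B^{(N)}\psi\rangle=\Re\langle |A|^k\psi,B\psi\rangle$, so the defining inequality for $(A^{(N)},B^{(N)})$ reduces to that of $(A,B)$, the domain condition being automatic in finite dimension. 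Applying Proposition \ref{PRO_croissance_norme_A} to each Galerkin system and using $\|\pi_N\psi_j\|_{k/2}\le\|\psi_j\|_{k/2}$ gives $\|\psi_N(t)\|_{k/2}\le e^{c_{(A,B)}K}\|\psi_j\|_{k/2}$, uniformly in $N$.

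Next comes the core step, convergence in $\|\cdot\|_0$. Splitting $\psi=\pi_N\psi+(1-\pi_N)\psi$, the tail is harmless: since the $\lambda_m$ tend to $+\infty$, $\|(1-\pi_N)\psi(t)\|_0\le |\lambda_{N+1}|^{-k/2}\|\psi(t)\|_{k/2}$ is uniformly small. For the in-space error $e(t)=\pi_N\psi(t)-\psi_N(t)\in\mathcal{L}_N$, using $\pi_N A=A^{(N)}\pi_N$ on $D(A)$ one finds $e'=(A^{(N)}+uB^{(N)})e+u\,\pi_N B(1-\pi_N)\psi$ with $e(0)=0$. Since $A^{(N)}+uB^{(N)}$ generates unitaries, Duhamel's formula and the smoothing hypothesis $\|B\varphi\|\le d\|\varphi\|_{r/2}$ yield
$$\|e(t)\|_0\le\int_0^t|u(\tau)|\,\|B(1-\pi_N)\psi(\tau)\|\,\mathrm{d}\tau\le d\int_0^t|u(\tau)|\,\|(1-\pi_N)\psi(\tau)\|_{r/2}\,\mathrm{d}\tau.$$
Because $r<k$, the tail estimate $\|(1-\pi_N)\varphi\|_{r/2}\le|\lambda_{N+1}|^{(r-k)/2}\|\varphi\|_{k/2}$ together with the a priori bound gives $\|e(t)\|_0\le d\,|\lambda_{N+1}|^{(r-k)/2}e^{c_{(A,B)}K}\|\psi_j\|_{k/2}\,K$, which is uniform in $t$ and tends to $0$ as $N\to\infty$. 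Hence $\|\psi(t)-\psi_N(t)\|_0\to 0$ uniformly in $t\ge 0$ and in $u$ with $\|u\|_{L^1}<K$.

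Finally, since $\|\varphi\|_{s/2}\le\|\varphi\|_{k/2}^{s/k}\,\|\varphi\|_0^{1-s/k}$ for $0\le s<k$ (Hölder applied to the spectral measure of $|A|$, valid even when $A$ is not injective), applying this to $\varphi=\psi(t)-\psi_N(t)$ and combining the uniform $\|\cdot\|_{k/2}$-bound with the $\|\cdot\|_0$-convergence gives $\|\psi(t)-\psi_N(t)\|_{s/2}\to 0$ uniformly; choosing $N$ large enough to beat $\varepsilon$ simultaneously for the finitely many data $\psi_1,\dots,\psi_n$ concludes the proof. I expect the main obstacle to be this core step: controlling the leakage between $\mathcal{L}_N$ and its complement created by the unbounded operator $B$. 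This is precisely where weak coupling is used twice, through the energy bound of Proposition \ref{PRO_croissance_norme_A} and through the regularity gain $r<k$, so that the growth of the spectral gap $|\lambda_{N+1}|\to\infty$ dominates the growth of $B$.
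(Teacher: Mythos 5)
Your proof is correct. The paper gives no argument of its own for this proposition---it simply cites \cite[Proposition 4]{weakly-coupled}---and your reconstruction follows exactly the strategy of that reference: the uniform $\|\cdot\|_{k/2}$ bound from Proposition \ref{PRO_croissance_norme_A} (valid for the compressions with the same constant, as you correctly verify), the Duhamel estimate on the projected error with source $u\,\pi_N B(1-\pi_N)\psi$ controlled via $\|u\|_{L^1}<K$ and the regularity gain $r<k$, and spectral interpolation $\|\varphi\|_{s/2}\leq\|\varphi\|_{k/2}^{s/k}\|\varphi\|_{0}^{1-s/k}$ to pass to the intermediate norm.
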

\begin{proof}
 This is \cite[Proposition 4]{weakly-coupled}.
\end{proof}
\begin{remark}
 An interesting feature of Propositions \ref{PRO_croissance_norme_A} and \ref{prop:gga} is the fact that the bound  of the 
$|A|^{k/2}$ norm of the solution of (\ref{EQ_main}) or the bound on the error between the infinite dimensional 
system and its finite dimensional approximation only depend on the $L^1$ norm of the control, not on the time. 
\end{remark}
\begin{proposition}
Let $(A,B)$ be $k$-weakly coupled for some $k>0$. Then, for every $\psi_0$ in $\mathbf{S}_H$, for every $T>0$, $\{e^{KB}\psi_0, K \in \mathbf{R}\} \subset \overline{{\mathcal R}_T(\psi_0)}$.
\end{proposition}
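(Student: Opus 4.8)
The goal is to show that for a $k$-weakly coupled system, the torus $\{e^{KB}\psi_0\}$ lies in the closure of the reachable set at \emph{any} fixed time $T>0$. The natural idea is to approximate the flow $e^{KB}$ by a genuine trajectory of the control system on the short horizon $[0,T]$, using a control of large amplitude concentrated in time so that its $L^1$ norm is controlled. Let me think about how to realize the displacement $e^{KB}$ by the system dynamics in time $T$.

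Here is the plan. Fix $\psi_0\in\mathbf S_H$, $T>0$, and a target $e^{KB}\psi_0$. The plan is to use a control $u$ that is large on $[0,T]$ so that $v(t)=\int_0^t u$ reaches $K$ while $t$ stays small. First I would recall the change of variables used repeatedly in Section~\ref{SEC_basic_facts}: setting $v(t)=\int_0^t u(\tau)\mathrm d\tau$ and $y(t)=e^{-v(t)B}\Upsilon^u_t\psi_0$, one has
\begin{equation*}
\frac{\mathrm d}{\mathrm dt}y(t)=e^{-v(t)B}A\,e^{v(t)B}\,y(t)=\mathrm{Ad}_{e^{v(t)B}}(A)\,y(t),
\end{equation*}
so that $\Upsilon^u_t\psi_0=e^{v(t)B}y(t)$. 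The term $e^{v(t)B}$ produces exactly the motion along the $B$-torus, while $y$ measures the deviation from that motion caused by the drift $A$.

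Next I would exploit the weak-coupling hypothesis through Proposition~\ref{PRO_croissance_norme_A}. Choose $u$ to be a constant of large amplitude $u\equiv K/\tau$ on a short interval $[0,\tau]$ with $\tau<T$ and then $u\equiv 0$ on $[\tau,T]$; then $v(\tau)=K$ and $\|u\|_{L^1}=K$ is fixed independently of $\tau$. The crucial point is that the $L^1$ norm, and hence the bound $e^{c_{(A,B)}K}\|\psi_0\|_{k/2}$ on the $|A|^{k/2}$-norm of the solution, does \emph{not} depend on $\tau$. Since $\frac{\mathrm d}{\mathrm dt}y=\mathrm{Ad}_{e^{v(t)B}}(A)y$ and $y\mapsto Ay$ is controlled in the weaker $|A|^{s/2}$-norm by the $|A|^{k/2}$-norm (as $A$ maps $D(|A|^{k/2})$ into $D(|A|^{(k-1)/2})$ with the appropriate estimate, and the latter norm is bounded uniformly along the trajectory by Proposition~\ref{PRO_croissance_norme_A}), the deviation $\|y(\tau)-\psi_0\|$ is bounded by $\int_0^\tau\|Ay(t)\|\,\mathrm dt\le \tau\cdot\mathrm{const}(K,\psi_0)$, which tends to $0$ as $\tau\to0$. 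Therefore $\Upsilon^u_\tau\psi_0=e^{KB}y(\tau)\to e^{KB}\psi_0$ in $H$ as $\tau\to0$, and $\Upsilon^u_T\psi_0=\Upsilon^{0}_{T-\tau}e^{KB}y(\tau)$ is a free evolution of a point close to $e^{KB}\psi_0$.

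The main obstacle is the last step: even if $\Upsilon^u_\tau\psi_0$ is close to $e^{KB}\psi_0$, the remaining free drift over $[\tau,T]$ rotates it by $e^{(T-\tau)A}$, which is generally far from the identity. To fix this I would not switch off the control but instead keep $y$ close to $\psi_0$ \emph{while simultaneously} making $v$ return to, or reach, the desired value, by alternating sign; more cleanly, I would use a fast control so that $\mathrm{Ad}_{e^{v(t)B}}(A)$ averages out over the short interval and the genuine trajectory $\Upsilon^u_t\psi_0$ tracks $e^{v(t)B}\psi_0$ uniformly on $[0,T]$ up to an error that is uniform in $T$ and vanishes with the time scale. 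Concretely, one picks a time-rescaled control on all of $[0,T]$ whose integral sweeps $[0,K]$ and applies Proposition~\ref{PRO_croissance_norme_A} to bound $\|Ay\|$ uniformly; the total drift-induced error is then $\int_0^T\|Ay(t)\|\mathrm dt$ which, after the rescaling, is made arbitrarily small. Taking the limit and using that $\Upsilon^u_T\psi_0\to e^{KB}\psi_0$ shows $e^{KB}\psi_0\in\overline{{\mathcal R}_T(\psi_0)}$, and since $K$ is arbitrary the whole torus lies in the closure.
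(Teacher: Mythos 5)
Your opening construction is, in substance, the paper's own proof: apply the impulsive control $u\equiv K/\tau$ on $[0,\tau]$, note that $\|u\|_{L^1}=|K|$ is independent of $\tau$, and let the weak-coupling estimates (which depend only on the $L^1$ norm, not on time) carry the limit $\tau\to0$. But your implementation of that limit has a real gap. Bounding $\|y(\tau)-\psi_0\|\le\int_0^\tau\|Ay(t)\|\,\mathrm{d}t$ requires a uniform bound on $\|Ay(t)\|=\|A\Upsilon^u_t\psi_0\|=\|\Upsilon^u_t\psi_0\|_1$, while Proposition \ref{PRO_croissance_norme_A} controls only $\|\cdot\|_{k/2}$; your reduction ``$\|A\psi\|\lesssim\|\psi\|_{k/2}$'' works only for $k\ge2$, whereas the statement assumes merely $k>0$ (for a $1$-weakly-coupled pair one controls the energy norm $\|\cdot\|_{1/2}$, which says nothing about $\|Ay\|$). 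Moreover $\psi_0\in\mathbf{S}_H$ is not assumed to lie in $D(|A|^{k/2})$ (repairable by density plus unitarity of the propagators --- a remark the paper also omits), and for unbounded $B$ the differentiation of $y=e^{-v(t)B}\Upsilon^u_t\psi_0$ and the invariance of the relevant domains under $e^{v(t)B}$ are not supplied by anything you quote. The paper sidesteps all of this by routing the same impulsive control through Proposition \ref{prop:gga}: since the $L^1$ bound is $\tau$-independent, a single Galerkin dimension $N$ works for every $\tau$, and inside $\mathcal{L}_N$ the elementary limit $\tau A^{(N)}+KB^{(N)}\to KB^{(N)}$ gives $X^{u}_{(N)}(\tau,0)\pi_N\psi_0\to e^{KB^{(N)}}\pi_N\psi_0$; no estimate on $\|A\,\cdot\|$ is ever needed, for any $k>0$.

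The fatal step, however, is your closing repair of the exact-time-$T$ issue. Spreading a time-rescaled control over all of $[0,T]$ does \emph{not} make $\int_0^T\|Ay(t)\|\,\mathrm{d}t$ small: $\|Ay(t)\|$ equals $\|A\psi_0\|$ at $t=0$ and stays of that order, and it is insensitive to rescaling of $u$, so the drift-induced error is of order $T\|A\psi_0\|$, not $o(1)$. Likewise, fast oscillation does not make $\mathrm{Ad}_{e^{v(t)B}}(A)$ ``average out'': averaging produces an effective \emph{nonzero} generator --- this is precisely the mechanism Proposition \ref{PRO_cosinus} exploits to produce the motion $e^{tA^{(N)}}e^{KM^{\dag}}$, not rest --- and the impossibility of suppressing the $A$-part of the dynamics by choice of $u$ is exactly what drives Propositions \ref{PRO_min_diam_inf_A_borne} and \ref{PRO_min_rho_eigenvecotr}. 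The obstacle you identified (filling the remaining time $[\tau,T]$) is real, but the paper does not attempt your repair: its proof evaluates the trajectory at the small time $\tau$ itself, i.e.\ it establishes approximate reachability of $e^{KB}\psi_0$ in \emph{arbitrarily small} time, which is the only form actually invoked in the proof of Theorem \ref{TH_main}. Your construction stopped at time $\tau$ already yields that (for $k\ge2$, modulo the domain points above); the extension to exact time $T$ is the part that fails.
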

\begin{proof}
Fix $K$ in $\mathbf{R}$ and $\varepsilon>0$. For every $\eta>0$, consider the control $u_{\eta}$, constant equal to $K/\eta$ on the time interval $[0,\eta]$ and equal to zero elsewhere. By Proposition \ref{prop:gga}, there exists $N$ such that $\|\Upsilon^{u_{\eta}}_{t}(\psi_0) -
X^{u_\eta}_{(N)}(t,0)\pi_{N} \psi_{0}\| < \varepsilon$ for every $t\geq 0$. The classical theory of ODE ensures that $X^{u_\eta}_{(N)}(\eta,0)\pi_{N} \psi_{0}$ tends to
$e^{KB}\pi_N \psi_0$ as $\eta$ goes to zero.
\end{proof}

\section{AN EXAMPLE OF APPROXIMATE CONTROLLABILITY IN TIME ZERO}\label{SEC_Example}

\subsection{A toy model}\label{SEC_Example_model}

We consider the following bilinear control system
\begin{equation}\label{EQ_Toy_model}
\mathrm{i} \frac{\partial \psi}{\partial t} = -|\Delta|^{\alpha}\psi + u(t) \cos\theta \psi \quad \theta \in \Omega
\end{equation}
where $\alpha$ is a real constant, $\Omega=\mathbf{R}/{2\pi}$ is the one dimensional torus, $H=L^2(\Omega,\mathbf{C})$ and $\Delta$ is the Laplace-Beltrami operator on $\Omega$.
\begin{remark}
  A realistic (and widely used) model for a rotating molecule is (\ref{EQ_Toy_model}) with $\alpha=1$. For $\alpha\neq 1$, the presented example is purely academic.
\end{remark}

The  self-adjoint operator $-\Delta$ has purely discrete spectrum $\{k^2,k \in \mathbf{N}\}$. All its eigenvalues are double but zero which is simple. The eigenvalue zero is associated 
with the constant
functions. The eigenvalue $k^2$ for $k>0$ is associated with the two eigenfunctions $\theta \mapsto\frac{1}{\sqrt{\pi}} \cos(k \theta)$ and $\theta \mapsto \frac{1}{\sqrt{\pi}} \sin(k 
\theta)$. The Hilbert space $H=L^2(\Omega,\mathbf{C})$ splits in two subspaces $H_e$ and $H_o$, the spaces of even and odd functions of $H$ respectively. The spaces $H_e$ and 
$H_o$ are stable under the dynamics of (\ref{EQ_Toy_model}), hence no global controllability is to be expected in $H$.

We consider the restriction of (\ref{EQ_Toy_model}) to the space $H_o$. The function $\phi_k:\theta \mapsto \sin(k \theta)/\sqrt{\pi}$ is an eigenvector of the skew-adjoint operator 
$A=\mathrm{i}|\Delta_{|H_o}|^{\frac{5}{2}}$ associated with eigenvalue $\mathrm{i}k^{2\alpha}$. The familly $(\phi_k)_{k\in \mathbf{N}}$ is an Hilbert basis of $H_o$. Here, $B$ is the restriction to $H_o$ of the multiplication by 
$-\mathrm{i}\cos(\theta)$. The skew-adjoint operators $B$ is bounded and has no eigenvalue: $\mathcal{L}_B=H_o$.
For every $j,k$, $\langle \phi_j, B \phi_k\rangle=0$ if $j=k$ or $|j-k|\geq 2$, and $\langle \phi_j, B \phi_{j+1}\rangle= -\mathrm{i}/2$.

\begin{thm}\label{TH_main}
If $\alpha>5/2$, then for every $\psi_0,\psi_1$ in the  Hilbert unit sphere of $H_o$, for every $\varepsilon>0$, for every $T>0$, there exists a piecewise constant function $u:[0,T]\rightarrow \mathbf{R}$ such that $\|\Upsilon^u_{T} \psi_0 -\psi_1 \|<\varepsilon$. In other words, for (\ref{EQ_Toy_model}), if $\alpha>5/2$ then $\rho=0$.
\end{thm}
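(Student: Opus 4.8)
The plan is to exhibit, for arbitrarily small $T$, a piecewise constant control steering $\psi_0$ to within $\varepsilon$ of $\psi_1$ by exploiting two families of motions that are both available in arbitrarily small time. The first family is the modulations $e^{KB}$: by the last proposition of Section~\ref{SEC_wc}, $\{e^{KB}\psi,K\in\mathbf{R}\}\subset\overline{{\mathcal R}_T(\psi)}$ for every $T>0$, so these can be concatenated \emph{for free}. The second family is the resonant transitions between adjacent eigenstates. The decisive point is that the spectral gaps here are \emph{unbounded}: since $\lambda_{k}=k^{2\alpha}$ we have $\lambda_{k+1}-\lambda_{k}\sim 2\alpha\,k^{2\alpha-1}\to+\infty$, so the period $T_{k}=2\pi/(\lambda_{k+1}-\lambda_{k})\sim k^{1-2\alpha}$ of the resonant cosine pulse of Proposition~\ref{PRO_cosinus} tends to $0$. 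Thus a transition performed at a high enough frequency is essentially instantaneous, which is impossible in finite dimension where all gaps are bounded. First I would reduce the problem to a finite-dimensional target by density, fixing $N$ so that $\psi_0,\psi_1$ are $\varepsilon$-close to normalized projections in $\mathcal{L}_N$.

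Next I would build the control as a finite word in these two alphabets. Starting from the known approximate-controllability scheme (a finite sequence of modulations $e^{K_iB}$ interleaved with adjacent transitions steering $\psi_0$ near $\psi_1$), the idea is to replace every transition—which at low frequency would be slow—by a \emph{high-frequency replica}. Concretely, one transports the relevant population into a high-frequency window using the free modulations $e^{KB}$ together with the RAGE theorem (Theorem~\ref{TH_RAGE}), which guarantees that suitable $e^{KB}$ make $\|\pi_N e^{KB}\psi\|$ small, i.e. push mass above frequency $N$; one then performs the fast rotation there, choosing $K$ and $\phi$ as in Section~\ref{SEC_exponential} so that $e^{KM^{\dagger}}$ realizes the prescribed rotation in the plane $\mathrm{span}(\phi_k,\phi_{k+1})$; finally one transports back. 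Throughout, the weak-coupling estimates of Propositions~\ref{PRO_croissance_norme_A} and~\ref{prop:gga}—valid here for every positive index since $B$ is bounded—ensure that a finite-dimensional Galerkin model approximates the true flow uniformly in time, with error governed only by the ($L^1$-bounded) controls and \emph{not} by the (large) pulse amplitudes.

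For the time estimate I would track the cost of each high-frequency transition. By Proposition~\ref{PRO_cosinus} the number of periods needed to reach error $\delta$ is $n\sim(C+1)/\delta$, and the elapsed time is $n\,T_{k}$. Estimating $C$ at frequency $k$ shows that the nearest competing gap $\lambda_{k}-\lambda_{k-1}$ satisfies $(\lambda_{k}-\lambda_{k-1})/(\lambda_{k+1}-\lambda_{k})\to1$, so $\sin\!\bigl(\pi|\lambda_j-\lambda_l|/|\lambda_{k+1}-\lambda_k|\bigr)\sim \pi(2\alpha-1)/k$ while the oscillatory numerator stays bounded; hence $C\sim k/(2\alpha-1)$. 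The time of one transition is therefore of order $n\,T_{k}\sim k^{2-2\alpha}/\delta$, which tends to $0$ as $k\to\infty$. Summing over the finitely many transitions of the word and letting the common frequency $k$ grow makes the total elapsed time smaller than $T$.

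The hard part is the \emph{simultaneous} control of all error sources as $k\to\infty$. The single-transition estimate already requires the exponent $2-2\alpha<0$, i.e. $\alpha>1$; the stronger hypothesis $\alpha>5/2$ is what survives once one must also (a) keep the Galerkin truncation level—which has to exceed $k$, since the state now carries mass near frequency $k$—from outrunning the vanishing period, (b) absorb the growth $C\sim k$ coming from the near-resonance of neighbouring gaps, and (c) guarantee that the free transport steps via $e^{KB}$ and RAGE deposit and retrieve the population in the high window without spoiling the target in the norm provided by Proposition~\ref{prop:gga}. Balancing these competing rates is the crux of the argument and is precisely where the detailed estimates of Sections~\ref{SEC_Example_estimates} and~\ref{SEC_Example_proof} must be brought to bear; I expect this bookkeeping, rather than any single inequality, to be the main obstacle.
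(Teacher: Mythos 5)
Your strategic skeleton coincides with the paper's: free motions $e^{KB}$ available in arbitrarily small time, the RAGE theorem to reach high frequencies, resonant nearest-neighbour pulses whose period $2\pi/(\lambda_{N+1}-\lambda_N)\sim N^{1-2\alpha}$ vanishes, and the weak-coupling Galerkin estimate (Proposition \ref{prop:gga}) whose error depends only on the $L^1$ norm of the control. But your central mechanism --- ``replace every transition of a known low-frequency scheme by a high-frequency replica, transporting the population up with $e^{KB}$ and back'' --- would fail as stated. Theorem \ref{TH_RAGE} only guarantees that $\|\pi_{N_0}e^{k_nB}\psi\|$ is small along some sequence $k_n$; it gives no control over the \emph{shape} of $e^{KB}\psi$ in the high window, and the conjugate $e^{-KB}\,R\,e^{KB}$ of a high-frequency plane rotation $R$ is not a rotation in a low-frequency plane, because $e^{KB}$ does not map $\mathrm{span}(\phi_k,\phi_{k+1})$ to any $\mathrm{span}(\phi_m,\phi_{m+1})$. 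The paper sidesteps this with a rendezvous-and-reversal argument that your proposal lacks: push \emph{both} $\psi_0$ and $\psi_1$ into the window $\mathcal{L}_P\cap\mathcal{L}_{N_0-1}^{\perp}$ via $e^{K_0B}$ and $e^{K_1B}$; steer each resulting state, whatever it is, down to the \emph{single} eigenstate $\phi_{N_0}$ by a cascade of $P-N_0$ nearest-neighbour pulses (Proposition \ref{PRO_cosinus} combined with the choice of $K$ and $\phi$ from Section \ref{SEC_exponential}); match the two phases $\vartheta_0,\vartheta_1$ by free drift of duration at most $2\pi/N_0^{2\alpha}$; then run the second control backwards. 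No prescribed low-frequency operation ever needs to be implemented, which is exactly what makes the RAGE step usable.

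The quantitative part, which you explicitly defer as ``bookkeeping,'' is not an afterthought: it is where $\alpha>5/2$ comes from, and your count is structurally off. The transitions are not ``finitely many at a common frequency $k$'': the cascade visits every level $N_0\le N\le P$, and $P$ grows with $N_0$ since the RAGE output spreads over many levels, so the per-transition error must be summable --- the paper takes $\eta_N\sim N_0\varepsilon/N^2$ --- which converts your per-transition time $k^{2-2\alpha}/\delta$ into $\tau_N\lesssim N^{4-2\alpha}/(N_0\varepsilon)$ (your estimate $C\sim k$ from the near-resonant gap ratio is correct and matches the paper's $C_N+1\le 2N/(\alpha\pi)$). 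The theorem's hypothesis is then precisely the convergence condition $2\alpha-4>1$ for $\sum_{N\ge N_0}N^{4-2\alpha}$, giving a total time of order $N_0^{4-2\alpha}/(\varepsilon(2\alpha-5))$, which tends to zero; this is Proposition \ref{PRO_majoration_temps}, and it, together with the rendezvous construction above, \emph{is} the proof. Your heuristic threshold ``$\alpha>1$ per transition'' undercounts both the $N^2$ error-allocation factor and the growing word length, so the proposal as written does not reach the stated result.
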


\subsection{Some time estimates}\label{SEC_Example_estimates}

\begin{proposition}\label{PRO_majoration_temps}
Assume $\alpha>5/2$. Let $N_0$ large enough, $P\geq N_0$ and $\psi_0$, $\psi_1$ be in ${\mathcal L}_P \cap {\mathcal L}_{N_0-1}^{\perp}$ such that $\|\psi_0\|=\|\psi_1\|=1$.
 Then, for every $\varepsilon>0$, there exists $u$ such that (\ref{EQ_Toy_model}) steers $\psi_0$ to an
 $\varepsilon$-neighbourhood of $\psi_1$ in time less than 
$$ \frac{604}{\alpha^2  \varepsilon (2\alpha-5)} \frac{1}{(N_0-1)^{2\alpha-4}} + \frac{2\pi}{N_0^{2\alpha}}.$$
\end{proposition}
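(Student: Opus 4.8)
The plan is to reduce the infinite-dimensional transfer to a finite-dimensional one, to split it into elementary rotations between consecutive eigenmodes, to realize each rotation by a resonant control whose period shrinks with the mode index, and finally to sum the (short) durations. First I would record that $(A,B)$ is weakly-coupled for every $k>0$: since $B$ is the \emph{bounded} multiplication by $-\mathrm{i}\cos\theta$ one has $\|B\psi\|\leq\|\psi\|$, so the hypothesis of Proposition~\ref{prop:gga} holds with $r=0$; moreover $2\,\Re\langle|A|^k\psi,B\psi\rangle=\langle[\,|A|^k,B]\psi,\psi\rangle$, and the commutator $[\,|A|^k,B]$ couples only consecutive modes, with $(j,j\!\pm\!1)$-entries of size $|j^{2\alpha k}-(j\!\pm\!1)^{2\alpha k}|/2$, of order $\alpha k\,j^{2\alpha k-1}$, hence dominated by the diagonal $j^{2\alpha k}$ of $|A|^k$. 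Consequently Proposition~\ref{prop:gga} lets me, at any prescribed accuracy, replace the true propagator $\Upsilon^u$ by a Galerkin propagator $X^u_{(N)}$ with $N$ large (depending on $\varepsilon$, on $P$, and on the eventual $L^1$-size of $u$), so that it suffices to design a control steering $\psi_0$ near $\psi_1$ inside $\mathcal{L}_N$.

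Inside $\mathcal{L}_N$ I connect $\psi_0$ to $\psi_1$ by a finite product of two-level rotations in the planes $\mathrm{span}(\phi_j,\phi_{j+1})$, $N_0\leq j<P$: a downward sweep folds $\psi_0$ onto the single eigenstate $\phi_{N_0}$, and the reverse sweep unfolds $\phi_{N_0}$ onto $\psi_1$. Each such rotation is produced by Proposition~\ref{PRO_cosinus} applied to the transition $(j,j+1)$, with resonant frequency $\lambda_{j+1}-\lambda_j=(j+1)^{2\alpha}-j^{2\alpha}$; this transition is non-degenerate because $k\mapsto k^{2\alpha}$ is strictly convex, so the gaps $\lambda_{j+1}-\lambda_j$ are pairwise distinct while all non-consecutive matrix elements of $B$ vanish. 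The computation of Section~\ref{SEC_exponential} fixes the control phase $\phi$ and the angle $K$ so that $e^{KM^{\dagger}}$ realizes the prescribed rotation; the companion factor $e^{tA^{(N)}}$ is a free-evolution phase that I track and, if needed, correct by short waits and by the phases $\phi$ available at each step.

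The quantitative core is the constant $C$ of Proposition~\ref{PRO_cosinus} for $(j,j+1)$. The only transitions meeting $\{j,j+1\}$ with nonzero matrix element are the neighbours $(j-1,j)$ and $(j+1,j+2)$, and these are \emph{nearly} resonant: strict convexity gives $(\lambda_j-\lambda_{j-1})/(\lambda_{j+1}-\lambda_j)=1-O(1/j)$, so the denominator $\sin\!\big(\pi|\lambda_l-\lambda_m|/|\lambda_{j+1}-\lambda_j|\big)$ is of size $\sim\pi(2\alpha-1)/j$ while the numerator stays of order one; hence $C\sim j/(2\alpha-1)$. With $\|B\|\leq1$ and $I=4$, Proposition~\ref{PRO_cosinus} realizes the $j$-th rotation up to an error of order $(1+2K_j)(C+1)/n_j$ and, since $\int_0^t|u^{\ast}|$ grows at rate $2|\lambda_{j+1}-\lambda_j|/\pi$, in time $t_j=\pi n_jK_j/(2(\lambda_{j+1}-\lambda_j))$.

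Finally I would distribute the error budget and sum the durations. Allocating to step $j$ the tolerance $\delta_j\propto 1/j^2$, so that $\sum_{j\geq N_0}\delta_j<\varepsilon$ with the normalisation contributing a factor $\sim 1/(N_0-1)$, forces $n_j\sim j^2(C+1)/(\varepsilon(N_0-1))$; then, using $\lambda_{j+1}-\lambda_j\sim2\alpha j^{2\alpha-1}$ and $C\sim j/(2\alpha-1)$, one gets $t_j\sim 1/\big(\alpha^2\varepsilon(N_0-1)\,j^{2\alpha-4}\big)$. The transfer time is bounded by $\sum_{j\geq N_0}t_j$, and the series $\sum_{j\geq N_0}j^{-(2\alpha-4)}$ converges \emph{precisely} when $\alpha>5/2$, with value of order $1/\big((2\alpha-5)(N_0-1)^{2\alpha-5}\big)$; combined with the factor $1/(N_0-1)$ this produces the first term $604\,\alpha^{-2}\varepsilon^{-1}(2\alpha-5)^{-1}(N_0-1)^{-(2\alpha-4)}$, the constant $604$ absorbing all the $2\pi$'s and the bounded factors $K_j(1+2K_j)$. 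A concluding free evolution of length at most $2\pi/N_0^{2\alpha}=2\pi/\lambda_{N_0}$ fixes the residual phase and gives the second term. The main obstacle is exactly this bookkeeping made uniform in $P$: the number of elementary steps grows with $P$, so the error budget must be spread over arbitrarily many potential transitions while the summed times stay $P$-independent — this is where the fast growth of the gaps, hence the threshold $\alpha>5/2$, is essential — and the near-resonances must be handled through the sharp estimate $C\sim j/(2\alpha-1)$, since a cruder bound on $C$ would destroy the convergence of the time series.
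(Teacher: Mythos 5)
Your proposal is correct and follows essentially the same route as the paper's own proof: a Galerkin reduction via Proposition \ref{prop:gga} under an $L^1$ control bound, a downward sweep of resonant two-level rotations (Proposition \ref{PRO_cosinus}, with angle and phase fixed as in Section \ref{SEC_exponential}) folding $\psi_0$ onto $\phi_{N_0}$ followed by the time-reversed sweep towards $\psi_1$, a mid-course free evolution of duration at most $2\pi/N_0^{2\alpha}$ to match phases, an error budget $\eta_N\propto N_0\varepsilon/N^2$, and the same estimates $C_N=O(N/\alpha)$, $\tau_N=O\bigl(1/(\alpha^2 N_0\varepsilon N^{2\alpha-4})\bigr)$, whose summability is exactly the condition $\alpha>5/2$. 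Your quantitative bookkeeping ($n_j$, $t_j$, the convergent series in $j^{-(2\alpha-4)}$, and the $P$-uniformity) matches the paper's computation step for step.
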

\begin{proof}
From Proposition \ref{prop:gga}, there exists $N_1$ in $\mathbf{N}$ such that, for every $u$, $\|u\|_{L^1}\leq 8 P$ implies that $\|X^u_{(N_1)}(t,0)\psi_j -\Upsilon^u_t \psi_j \| \leq \varepsilon/2$ for $j=0,1$. The problem is now to give an upper bound on the time needed to steer $\psi_0$ to $\psi_1$ with system 
$$
\frac{\mathrm{d}\psi}{\mathrm{d}t}=A^{(N_1)} \psi + u(t) B^{(N_1)} \psi.
$$
The idea is to find a control $u_1$ that steers $\psi_0$ to a neighborhood of $e^{\mathrm{i}\vartheta_0} \phi_{N_0}$ for some $\vartheta_0$ in $\mathbf{R}$ in time $T_0$ and, similarly, a control $u_2$ that steers $\psi_1$ to a neighborhood of $e^{\mathrm{i}\vartheta_1} \phi_{N_0}$ for some $\vartheta_1$ in $[\vartheta_0,\vartheta_0+2\pi)$ in time $T_1$. The final control is the concatenation of control $u_1$, of control $0$ during time $|\vartheta_1-\vartheta_0|/\lambda_{N_0}$ and finally of the time-reverse of $u_2$ (which steers $e^{\mathrm{i}\theta_1} \phi_{N_0}$ to a neighborhood of  $\psi_1$). The total duration is $T_0+ |\vartheta_1-\vartheta_0|/\lambda_{N_0} +T_1$.
What remains to do is to give an estimate for $T_0$ (the same computation gives a similar result for $T_1$). 

Let us describe intuitively our method. To induce the transition from $\psi_0$ to $\phi_{N_0}$, we will first put the $L^2$ mass of the $P^{th}$ level of $\psi_0$ to level $(P-1)$, without changing the modulus of the other coordinates of $\psi_0$. Then, we put all the $L^2$ mass of the  $(P-1)^{th}$ level to level $P-2$, and so on until all the mass is concentrated, up to a small error, on level $N_0$.

We give now a formal description of the above method.
Write $\psi_0=\sum_{N=N_0}^{P} x_N \phi_N$. For $N=N_0,\ldots,P$, we define
$$
u^{\ast}:t\mapsto (\lambda_{N+1}-\lambda_N) \cos(|\lambda_{N+1}-\lambda_N|t +\phi_N)
$$
$$
\theta_N=\arctan \left ( \frac{|x_N|}{\sqrt{\sum_{j=N+1}^P |x_j|^2}} \right ), \quad K_N=4 \frac{\pi-2\theta_N}{\pi},
$$
$$
C_N=\frac{\left |\int_0^{\frac{2\pi}{(N+1)^{2\alpha}-N^{2\alpha}}} u^{\ast}(t) e^{\mathrm{i}(((N+2)^{2\alpha}-(N+1)^{2\alpha})t}\mathrm{d}t \right |}{\left |\sin \left (\pi  \frac{(N+2)^{2\alpha}-(N+1)^{2\alpha}}{(N+1)^{2\alpha}-N^{2\alpha}}\right ) \right |},
$$
$$
\eta_N=\frac{N_0 \varepsilon}{4N^2},
n_N=\frac{4(1+2 K_N \|B^{(N_1)}\|) (C_{N}+1)\|B^{(N_1)}\|}{\eta_N},
$$
$$
\tau_N=\frac{\pi K_N n_N}{2|\lambda_{N+1}-\lambda_N| }\leq \frac{8\pi (1+2 K_N \|B\|)(C_N +1)\|B\| }{(\lambda_{N+1}-\lambda_N)\eta_N}
$$
We proceed by induction on $N$ from $P$ to $N_0$ to steer $\psi_0=\psi_0^P$ to $\psi_0^{P-1}$ such that $\|(1-\pi_{P-1})\psi_0^{P-1}\| + \|\pi_{N_0-1}\psi_0^{P-1}\| \leq \eta_P$. If $\psi_0^{N}$ is constructed, we
 steer  $\psi_0^N$ to $\psi_0^{N-1}$ 
such that $\|(1-\pi_N)\psi_0^N\|^2 + \| \pi_{N_0-1} \psi_0^N\|^2 \leq   1-\left (1-\sum_{l\geq N} \eta_l \right )^2$. At step $P-N_0$, $\psi_{N_0}^{N_0+1}$ is 
in an $\sum_{N_0}^P {2\eta_N} \leq {\varepsilon}/2$ neighborhood of
 $e^{\mathrm{i}\vartheta_0}\phi_{N_0}$ for some $\vartheta_0$ in $\mathbf{R}$. 

This construction is done, following Proposition \ref{PRO_cosinus}, by using the
 control $t\mapsto u^{\ast}(t)/{n_N} $ during time $\tau_N$.
The $L^1$ norm of the control (equal to $K_N$) is chosen (using the computation of Section \ref{SEC_exponential}) in such a way that $\exp \left (K_N M^{\dag} \right )$ sends the mass of the $(N+1)^{th}$ eigenstate to the $N^{th}$ one, that is a rotation of angle $\theta_N$. Since, for every $N$, $b_{N,N+1}=-\mathrm{i}/2$, the matrix $M^{\dag}$ appearing in Proposition \ref{PRO_cosinus} is a block diagonal matrix with every block equal to zero, but the one in position $N,N+1$ equal to 
$$
\left ( \begin{array}{cc} 0 & -\mathrm{i}e^{\mathrm{i}\varphi_N} \pi/8\\
                                                           \mathrm{i}e^{-\mathrm{i}\varphi_N} \pi/8         & 0
                                                                  \end{array}
 \right  ).$$
 The phase
 $\phi_N$ is equal to $a_N-b_N+\pi$ where $a_N$ and $b_N$ are the phase of coordinates $N+1$ and $N$ of $\psi_0^{N+1}$. 

Notice that, while Proposition \ref{PRO_cosinus} is stated for $n$ an integer, $n_N$ is not an integer in general. Nevertheless we can apply  Proposition \ref{PRO_cosinus} with the integer part $\lfloor n_N \rfloor$  in order to  ensure that the transformation is done
  with an error less than $\eta_N$. Changing the integer $\lfloor n_N \rfloor$ to the real number $n_N$  does not change the bound on the error for more than an factor two for large $n_N$. 

With our definition of $A$ and $B$, $\lambda_N=N^{2\alpha}$ and $\|B\|=\sqrt{2}/2$. Straightforward computations yield, for $N_0$ large enough,
$$
C_N +1\leq \frac{2 N}{\alpha \pi}, \quad K_N \leq 4 \quad \tau_N \leq \frac{151}{\alpha^2 N_0 \varepsilon  N^{2\alpha-4} },
$$
and, for $N_0$ large enough, 
\begin{eqnarray*}
T_0& \leq& \sum_{N=N_0}^P \tau_N\\
& \leq& \frac{151}{\alpha^2 N_0 \varepsilon } \sum_{N=N_0}^P \frac{1}{N^{2\alpha-4}}\\
& \leq &  \frac{151}{\alpha^2 N_0 \varepsilon (2\alpha-5)} \frac{2}{(N_0-1)^{2\alpha-5}}\\
&\leq & \frac{302}{\alpha^2  \varepsilon (2\alpha-5)} \frac{1}{(N_0-1)^{2\alpha-4}}.
\end{eqnarray*}
Finally, for $N_0$ large enough, the total time needed to steer $\psi_0$ to an $\varepsilon$-neighborhood of $\psi_1$ is less than
$$
\frac{604}{\alpha^2  \varepsilon (2\alpha-5)} \frac{1}{(N_0-1)^{2\alpha-4}} + \frac{2\pi}{N_0^{2\alpha}},
$$
which concludes the proof of Proposition \ref{PRO_majoration_temps}.
\end{proof}

\subsection{Proof of Theorem \ref{TH_main}}\label{SEC_Example_proof}

\begin{proof}[of Theorem \ref{TH_main}]
Let $\psi_0$, $\psi_1$ in $\mathbf{S}_H$, $\varepsilon>0$.
To prove Theorem \ref{TH_main}, it enough to prove that the system (\ref{EQ_Toy_model}) can approximately steer one point of $\{e^{K B}\psi_0, K \in \mathbf{R}\}$ to an $\varepsilon$-neighborhood of one point of $ \{e^{K B}\psi_1, K \in \mathbf{R}\}$ in arbitrary small time.

Since $\alpha>5/2$, $N_0^{4-2\alpha}$ and $N_0^{-2 \alpha}$ tend to zero as $N_0$ tends to infinity. 
Define $N_0$   in $\mathbf{N}$ such that
$$
\frac{604}{\alpha^2  \varepsilon (2\alpha-5)} \frac{1}{(N_0-1)^{2\alpha-4}} + \frac{2\pi}{N_0^{2\alpha}}< \eta.
$$
 By the RAGE-Theorem (Theorem  \ref{TH_RAGE}), there exist $K_0$ and $K_1$ in $\mathbf{R}$ such that $\|\pi_{N_0} e^{K_0 B}\psi_0\|<\varepsilon$ and  $\|\pi_{N_0} e^{K_1 B}\psi_1\|<\varepsilon$.
There exists $P$ in $\mathbf{N}$ such that $\|(1-\pi_P)  e^{K_0 B}\psi_0\|<\varepsilon$ and  $\|(1-\pi_P)  e^{K_1 B}\psi_1\|<\varepsilon$. 

By Proposition \ref{PRO_majoration_temps}, there exists a control that steers $\pi_P(1-\pi_{N_0-1}) \psi_0$ to an $\varepsilon$-neighborhood of $\pi_P(1-\pi_{N_0-1}) \psi_1$ in time less than $\eta$. This concludes the proof of  Theorem  \ref{TH_main}.
\end{proof}

\section{CONCLUSIONS AND FUTURE WORKS}

\subsection{Conclusions}
The question of the minimal time needed to steer a quantum system from an arbitrary source to (a neighborhood of) an arbitrary target is of great importance in practice. 
This note presents a simple example of bilinear conservative control system in an infinite dimensional Hilbert space for which approximate controllability in the Hilbert unit sphere holds for arbitrary small time. 

\subsection{Future Works}
At this time, we have no simple criterion to decide in the general case (unbounded drift operator $A$ and control operator $B$ without eigenvalue) whether a system of type (\ref{EQ_main}) has zero temporal diameter. New methods will likely be needed for further investigations.

\section{ACKNOWLEDGMENTS}

This work has been partially supported by INRIA Nancy-Grand Est.

Second and third authors were partially supported by
French Agence National de la Recherche ANR ``GCM'' program
``BLANC-CSD'', contract number NT09-504590. The
third author was partially supported by European Research
Council ERC StG 2009 ``GeCoMethods'', contract number
239748.

\bibliographystyle{IEEEtran}
\bibliography{biblio}

\end{document}